\newtheorem{definition}{Definition}
\newtheorem{theorem}{Theorem}
\newtheorem{proposition}{Proposition}
\newcommand*\owedge{\mathpalette\@owedge\relax}
\newcommand*\@owedge[1]{%
  \mathbin{%
    \ooalign{%
      $#1\m@th\bigcirc$\cr
      \hidewidth$#1\m@th\wedge$\hidewidth\cr
    }%
  }%
}
\journal{Journal of Geometry and Physics}
\begin{document}

\begin{frontmatter}

\title{On the Parallels Between Minimal Surfaces and Einstein Four-Manifolds} 

\author{Mia Beard} 
\affiliation{organization={Mathematical Institute, University of Oxford},
            addressline={Radcliffe Observatory, Andrew Wiles Building, Woodstock Rd}, 
            city={Oxford},
            postcode={OX2 6GG}, 
            state={Oxfordshire},
            country={United Kingdom}}

\begin{abstract}
	Minimal surfaces and Einstein manifolds are among the most natural structures in differential geometry:  the former being well understood, the latter far less so. In this exposition, we survey the striking parallels between minimal surfaces in three-manifolds and Einstein metrics in four dimensions. These parallels include variational formulations, topological constraints, monotonicity formulae, compactness and epsilon-regularity theorems, and decompositions such as thick/thin and sheeted/non-sheeted structures. 
    
    Though distinct objects, the striking analogies between them raises a profound question: might there exist circumstances in which these objects are, in essence, manifestations of the same underlying geometry? Drawing on foundational and modern results, this work suggests a bridge between the two structures. In particular, it shows that certain Einstein four-manifolds admit a minimal immersion into a higher-dimensional sphere. As a key example, we realise $ \mathbb{CP}^{2} $ as a minimal submanifold of $ S^{7} $ via the Veronese map, demonstrating a deep unity between the two seemingly distinct worlds.
\end{abstract}

\begin{highlights}
\item Draws a deep structural parallel between minimal surfaces embedded in ambient three-manifolds and Einstein four-manifolds, revealing shared analytical and topological features across distinct geometric settings.
\item Synthesises major results in variational theory, second variation, monotonicity, and compactness to establish a unifying geometric narrative.
\item Introduces dual decomposition frameworks to illuminate localised geometric degeneration in both theories.
\item Proves that under symmetry constraints, a subset of Einstein four-manifolds admit minimal isometric immersions. 
\item Presents a concrete example of the minimal immersion of $ \mathbb{CP}^{2} $ into $ S^{7} $ via the Veronese embedding and Hopf fibration.
\end{highlights}

\begin{keyword}
Minimal surfaces \sep Einstein manifolds \sep Differential geometry \sep Geometric analysis

53-02 \sep 53A10 \sep 53C21 \sep 53C25

\end{keyword}

\end{frontmatter}

\section{Introduction}
\label{Intro}

What if the soap-film surfaces that minimise area in three dimensions and the elusive Einstein metrics that settle curvature in four dimensions share a hidden bridge? Indeed, at the crossroads of differential geometry, geometric analysis, and global topology lies a series of striking analogies that bind minimal surfaces and Einstein four-manifolds.

In this literature we share explore their shared features, such as the variational formulations, stability under second variation, monotonicity principles, compactness phenomena, and epsilon-regularity theorems, as well as thick/thin and sheeted/non-sheeted decomposition. Motivated by these analogies, we shall synthesise several distinct theorems across geometry to demonstrate that under appropriate conditions, Einstein four-manifolds are guaranteed to admit a minimal immersion into a higher-dimensional ambient sphere - a phenomenon which is equally unexpected and elegant. 

While this article shall explore several established theorems, it does so with the end goal of an original proposition and a key example to back it up.

In this introductory section, we develop the intuition behind each of these structures, so that the reader may fully appreciate the depth and elegance of the similarities.

\subsection{Minimal Surfaces}
\label{MinimalSurfaces}

Minimal surfaces are among the oldest and most fundamental objects in differential geometry. For over 250 years, they have fascinated mathematicians from Euler to Lagrange to Plateau. Defined as surfaces that locally minimise area, they arise as critical points of a variational principle which is satisfied precisely when the mean curvature vanishes across the entire surface. Physically, they manifest as soap films formed when a wire contour is dipped into a soapy solution, as demonstrated by an experiment famously conducted by Plateau in the 19th century. This led to the Plateau problem, which seeks the surface of least area spanning a given closed boundary \citep{Plateau1873}. This problem remains central to the study of minimal surfaces today.

\begin{definition}[Minimal surface]
    Let $ M \subset \mathbb{R}^{3} $ be a smooth surface. We say that $ M $ is minimal if the mean curvature vanishes identically:
    
    \begin{equation}
         H = \frac{1}{2}(k_1 + k_2) = 0 \iff k_{1} = -k_{2},
    \end{equation}

    where $ k_{1} $ and $ k_{2} $ are the principal curvatures of $ M $ at each point.
    
\end{definition}

This geometric condition is one of several equivalent characterisations of minimal surfaces. Analytically, if the surface is expressed as the graph of a function $ u : \Omega \subset \mathbb{R}^2 \to \mathbb{R} $, then the condition of vanishing mean curvature yields the minimal surface equation, which is the Euler-Lagrange equation associated to the area functional:

\begin{equation}
    (1+u_{x}^{2})u_{yy} - 2u_{x}u_{y}u_{xy} + (1 + u_{y}^{2})u_{xx} = 0
\end{equation}

From a variational approach, minimal surfaces arise as critical points of the area functional:

\begin{equation}
    \mathcal{A} = \int_{M} \mathrm{d}A,
\end{equation}

where $ \mathrm{d}A $ denotes the induced area element on the surface $ M $. The variation is typically taken with compact support or with the boundary fixed, depending on the setting.

Classical examples of minimal surfaces include the catenoid, Enneper's surface, Bour's minimal surface, and the gyroid. These surfaces arise not only in mathematical theory, often as solutions to the Plateau problem, but also are important in fields such as architecture, crystal geometry, and material science.

\begin{figure}[H]
  \centering
  \resizebox{0.6\textwidth}{!}{%
    \begin{tabular}{cc}
      \includegraphics[width=0.4\linewidth]{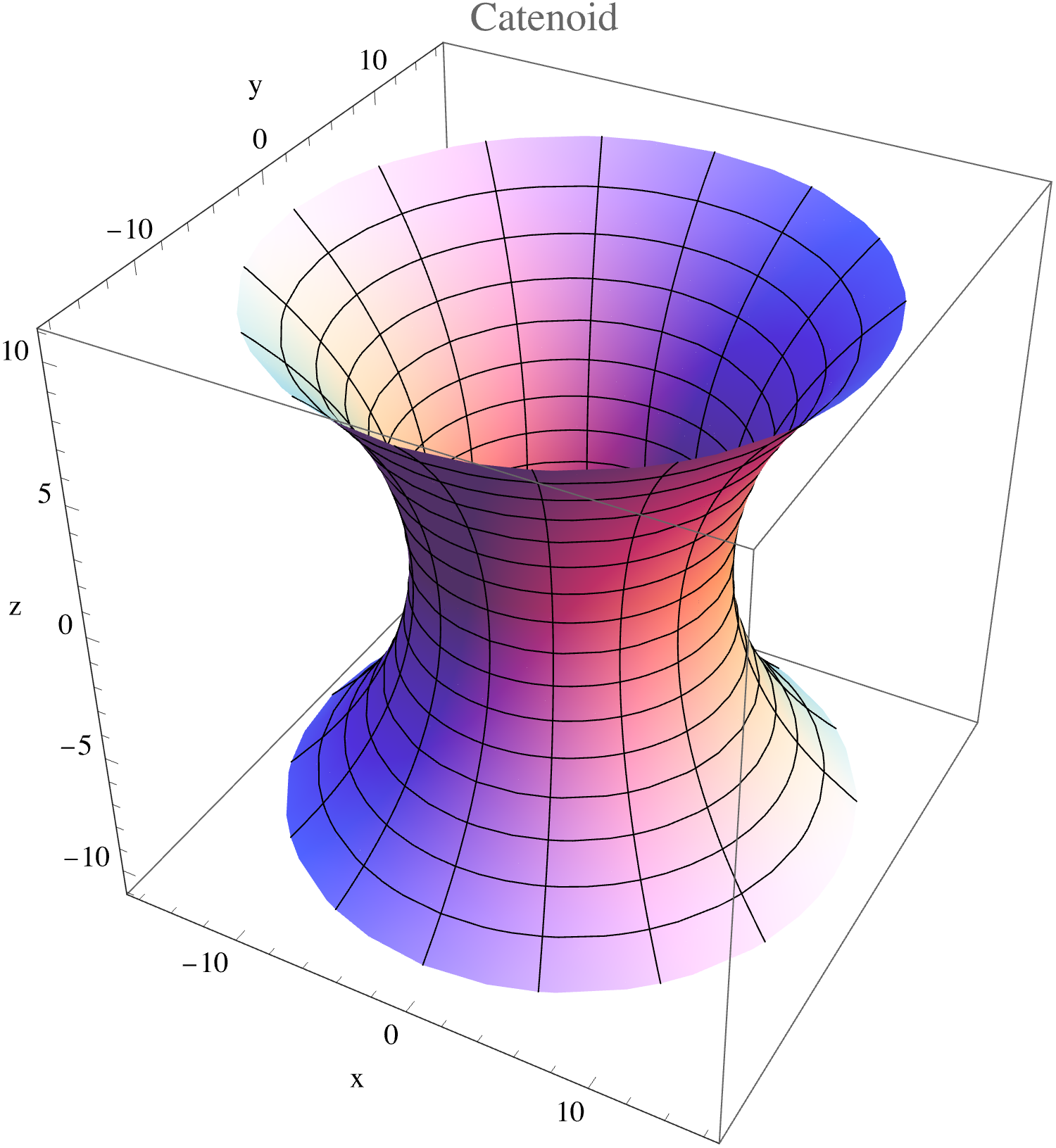} &
      \includegraphics[width=0.4\linewidth]{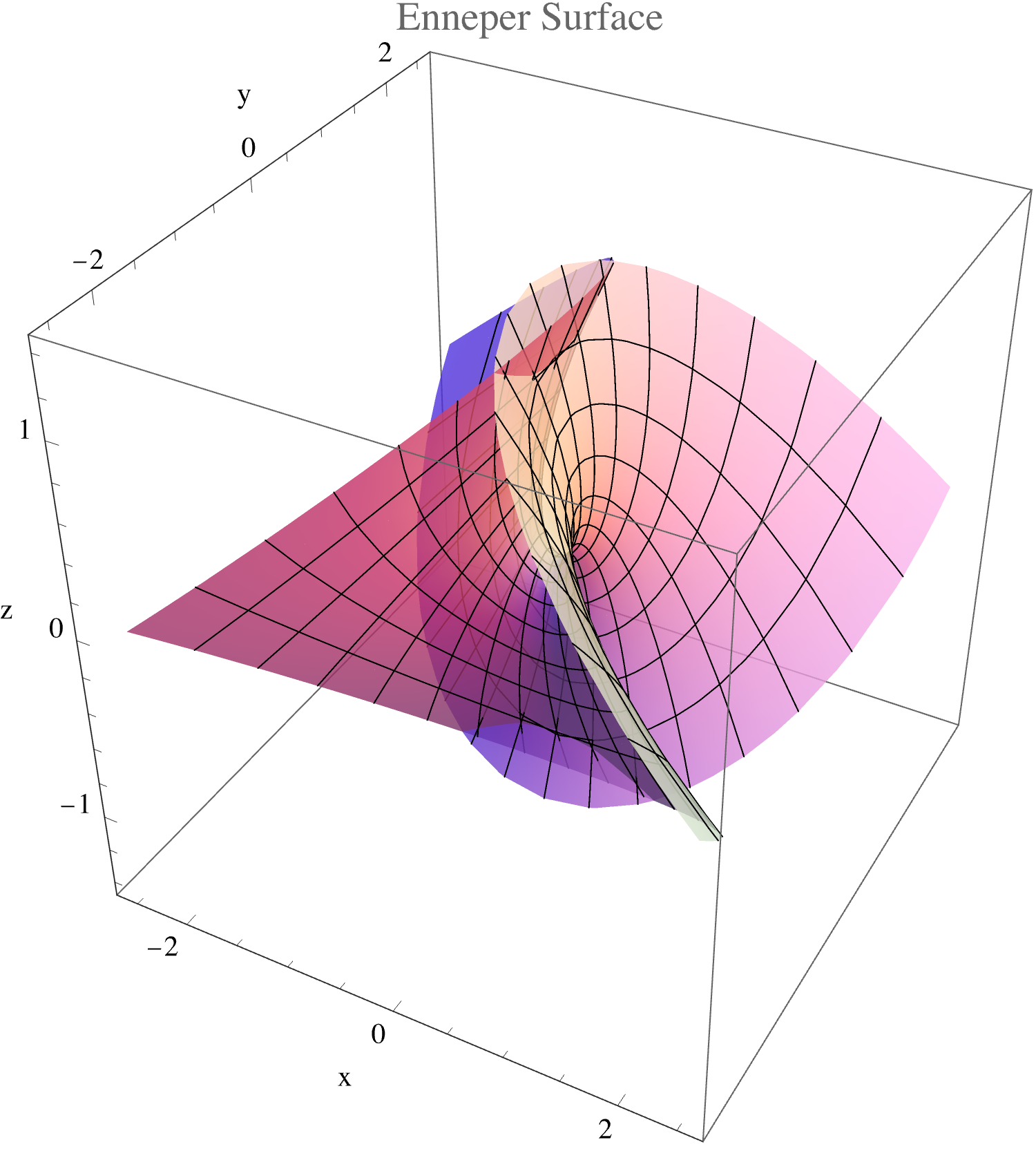} \\[1em]
      \includegraphics[width=0.4\linewidth]{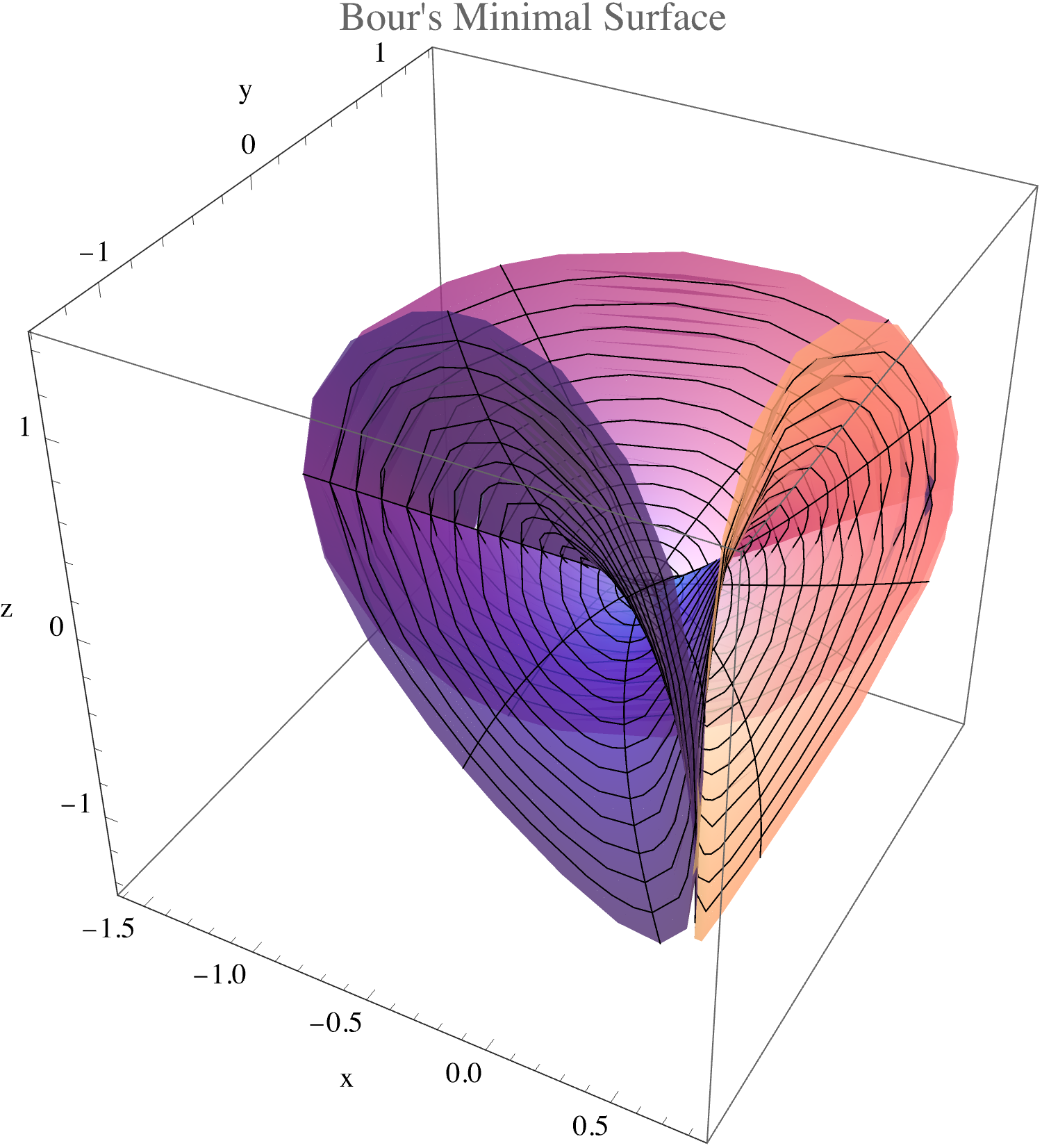} &
      \includegraphics[width=0.5\linewidth]{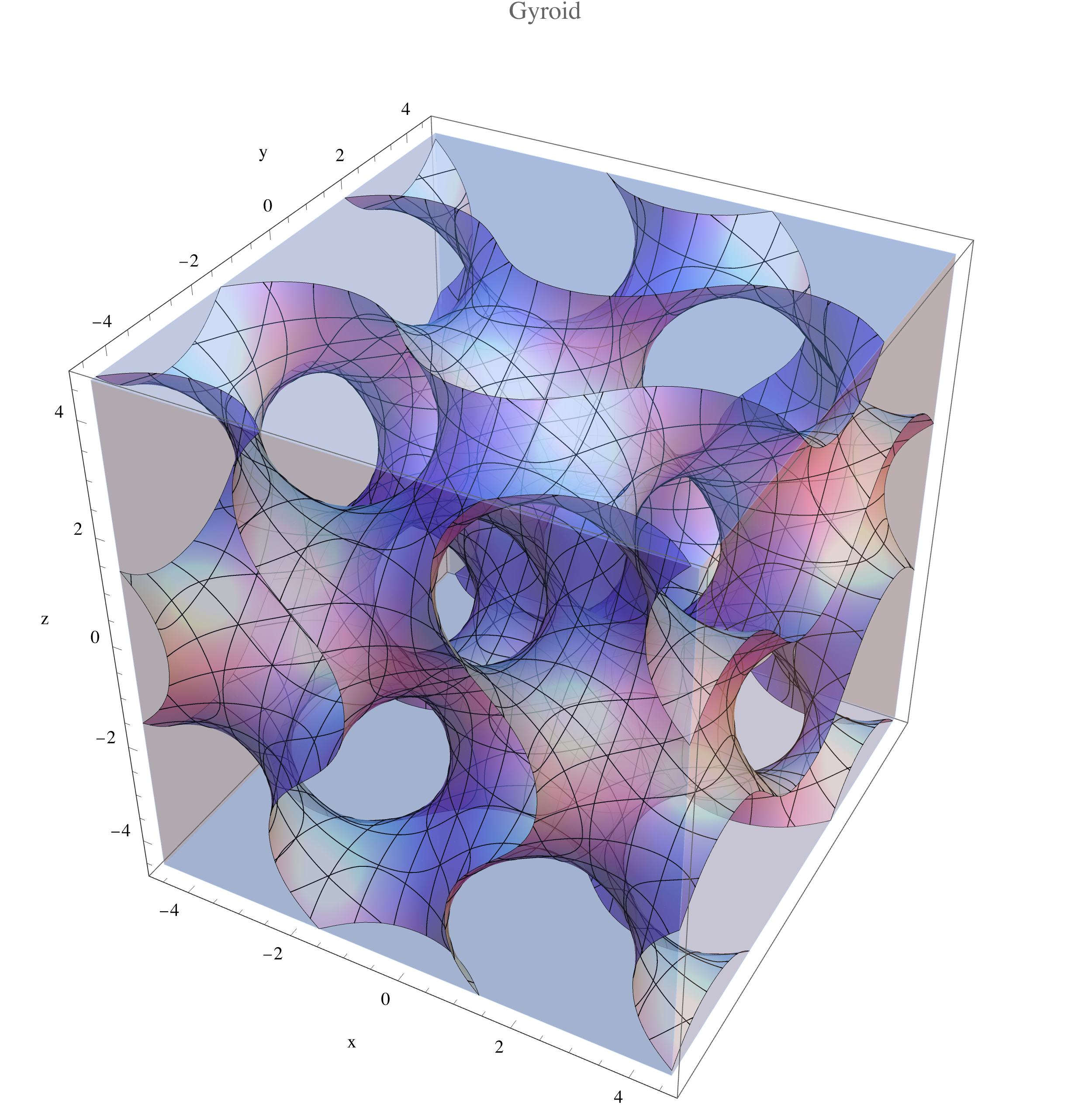}
    \end{tabular}
  }
  \caption{A selection of classical minimal surfaces: the catenoid (top left), the Enneper surface (top right), Bour's minimal surface (bottom left), and the gyroid (bottom right)}
  \label{fig:grid}
\end{figure}

\subsection{Einstein Manifolds}

Einstein manifolds are a distinguished class of  Riemannian manifolds, whose Ricci curvature  is everywhere proportional to the metric tensor, offering a natural generalisation of constant curvature. These manifolds gain physical interpretation in general relativity, where the metric serves as a solution to the vacuum Einstein field equations. We present the following definition, taken from \citep{besse1987einstein}:

\begin{definition}[Einstein manifold]

    A Riemannian manifold $ (M, g) $ is said to be Einstein if there exists a constant $ \lambda \in \mathbb{R} $ such that:

    \begin{equation}
        \operatorname{Ric}_{g} = \lambda g,
    \end{equation}

    where $ \operatorname{Ric}_{g} $ denotes the Ricci curvature tensor of $ g $.
    
\end{definition}

Our knowledge of the structure and classification of Einstein manifolds can be succinctly organised into dimensional cases. 

In dimension two, there is the most elementary manifestation: a Riemannian manifold is Einstein if and only if it has constant Gaussian curvature. These manifolds are locally isometric to the sphere, the Euclidean plane, or the hyperbolic plane.

In dimension three, a Riemannian metric is Einstein if and only if it has constant sectional curvature. This requires the manifold's universal cover to be diffeomorphic to either the three-sphere $ S^{3} $, Euclidean space $ \mathbb{R}^{3} $, or hyperbolic space $ \mathbb{H}^{3} $. This aligns with Thurston's geometrisation conjecture \citep{thurston1980three}, now a theorem \citep{perelman2002entropyformularicciflow, perelman2003finiteextinctiontimesolutions, perelman2003ricciflowsurgerythreemanifolds}, which states that every closed three-manifold decomposes into pieces admitting one of eight canonical geometric structures.

In dimension four, the classification of Einstein manifolds comes significantly more intricate. While some explicit examples are known, much of the theory relies on topological obstructions to the existence of Einstein metrics. Chief among these obstructions is the Hitchin-Thorpe inequality \citep{Thorpe1969, hitchin1974compact}, which states that a compact, oriented four-manifold admitting an Einstein metric must satisfy:

\begin{equation}
    \left| \tau(M) \right| \leq \frac{2}{3} \chi(M),
\end{equation}

where $ \tau(M) $ is the signature and $ \chi(M) $ the Euler characteristic. Equality is achieved in special cases such as K$3$ surfaces with Ricci-flat metrics.

We briefly recall the definitions for these two topological invariants \footnote{The recommendation is that the reader consults \citep{hatcher2002algebraic} for a more thorough understanding, as these definitions rely on the fundamentals of algebraic topology.}:

\begin{definition}[Signature of a Manifold]

Let $ M $ be a closed, oriented, smooth manifold of dimension $ 4k $. The signature $ \tau(M) $ is defined via the intersection form on middle cohomology $ Q_{M} $, where the signature equals the number of positive minus negative eigemvalues of of a matrix representing $ Q_{M} $ \citep{milnor1974characteristic, hirzebruch1995topological, atiyah1968index}.
    
\end{definition}

\begin{definition}[Euler characteristic]
    Let $M$ be a smooth manifold with a finite CW-complex structure. The Euler characteristic  is given by:
    
    \begin{equation}
        \chi(M) = \sum_{k = 0}^{\dim M} (-1)^k c_k,
    \end{equation}

    where $ c_{k} $ denotes the number of $ k $-cells in the complex \citep{hatcher2002algebraic}.
\end{definition}

Examples of Einstein four-manifolds include the 4-sphere $ S^{4} $, the complex projective plane $ \mathbb{CP}^{2} $ (with the Fubini-Study metric \ref{Fubini-Study-Metric}), the K3 surface with a Ricci-flat Kähler metric, and the Eguchi-Hanson space (a non-compact Ricci-flat example).

In higher dimensions, the knowledge of topological restrictions to the existence of Einstein manifold is even more murkey. Many manifolds with dimension greater than four admit a negative Einstein metric \citep{besse1987einstein}. When a positive Einstein metric is required, existence hinges on two more delicate criteria. Myers' Theorem \citep{Myers_1941} states that if a complete Riemannian manifold has Ricci curvature bounded below by a positive constant, then its fundamental group must be finite and its diameter must be bounded. Thus, any manifold admitting a positive Einstein metric must also have finite fundamental group, immediately excluding many topological types. Meanwhile, Yamabe problem \citep{yamabe1960deformation} considers whether a conformal class contains a metric of constant positive scalar curvature. Examples of Einstein manifolds in these higher dimensions include the Calabi-Yau manifolds, G2-manifolds, and Sasaki-Einstein spaces. A general classification remains out of reach.

\section{The Parallels Between Minimal Surfaces and Einstein Four-Manifolds}

The purpose of this chapter is to explore a number of parallels between minimal surfaces embedded within a three-manifold, and Einstein four-manifolds. These analogies were noted in \citep{SongMainReference}, building upon several prior developments in geometry and analysis. We attempt to articulate each parallel as clearly as possible, providing formal definitions, proofs where necessary, and referencing the original papers, so that the interested reader may investigate each idea in more depth, should they so wish.

Let us note that at no point do we claim that these objects are the same, in fact they are distinct and live in different spaces. However, this distinction makes the parallels even more intriguing.

\subsection{Variational Structures}

We begin with the most natural parallel: both minimal surfaces and Einstein metrics arise as critical points of geometric functionals, defined on spaces of embeddings or metrics, respectively.

Let $ M $ be a surface immersed in a Riemannian manifold $ (N, g) $. The area functional is given by:

\begin{equation}
     \mathcal{A} = \int_{M} \mathrm{d}A,
\end{equation}

where $ \mathrm{d}A $ denotes the induced volume form on $ M  $. A surface is minimal if and only if it is a critical point of $ \mathcal{A} $ under compactly supported variations of the immersion.

On the space of Riemannian metrics, the corresponding functional is the Einstein-Hilbert functional:

\begin{equation}
    \mathcal{E}(g) = \int_{M} R_{g} d \mathrm{vol}_{g},
\end{equation}

where $ R_{g} $ is the scalar curvature associated to the metric $ g $, and $ d \mathrm{Vol}_g $ its volume form.

This variational similarity deepens if we consider the geometric flows naturally associated with each structure.

\subsubsection{Mean Curvature Flow}

Mean Curvature Flow (MCF) is a geometric evolution equation in which a hypersurface deforms over time in the direction of steepest descent for area. The flow tends to smooth irregularities, but singularities can develop where the hypersurface pinches off, reflecting changes in the topology.

\begin{definition}[Mean Curvature Flow]
    Let $ {M_t}, {t \geq 0}$ be a family of hypersurfaces evolving smoothly in a Riemannian manifold. The mean curvature flow is given by:

        \begin{equation}
        \frac{\partial X}{\partial t} = - H \nu,
    \end{equation}
    
where: $ X: M^{n} \times [0, T) \rightarrow \mathbb{R}^{n+1} $ is a smooth family of immersions, $ H $ is the mean curvature, and $ \nu $ is the unit normal vector field \citep{nunemacher2003surface}.

\end{definition}

Minimal surfaces correspond to fixed points of this flow.

\subsubsection{Ricci Flow}

The intrinsic counterpart to MCF is the Ricci flow, introduced by Hamilton in 1982. It deforms a Riemannian metric in the direction of steepest descent for total scalar curvature under appropriate constraints on the volume, smoothing out curvature irregularities.

\begin{definition}[Ricci Flow]
    Let $ M $ be a Riemannian manifold, and $ g(t) $ be a one-parameter family of Riemannian metrics on $ M $. The Ricci flow evolves according to:

    \begin{equation}
        \frac{\partial g}{\partial t} = -2 \operatorname{Ric}_{g},
    \end{equation}
    
    \citep{chow2011ricci, brendle2010ricci}.
    
\end{definition}

Under suitable conditions and normalisation, Einstein metrics arise as fixed points of this flow.

\medskip

Both flows share an underlying philosophy: evolve the geometric structure (extrinsically in the case of mean curvature, and intrinsically in the case of Ricci curvature) toward a state of balance. In both settings, the development of singularities not only signals the breakdown of smooth evolution, but often reveals deep topological features of the underlying space.

\subsection{Second Variations}

The variational narritive developed so far extends to second-order: both minimal surfaces and Einstein metrics possess second variation formulas, whose associated differential operators encode their stability properties under infinitesimal deformations.

\subsubsection{Minimal Surfaces and Second-Variation}

Let $ M \subset N $ be a minimal surface embedded in a Riemannian manifold, and let $ \nu $ denote a smooth normal vector field along $ M $. The second variation of the area functional, under compactly supported normal variation, is given by:

\begin{equation}
    \delta^2 \mathcal{A}(V, V) = \int_M \langle \mathcal{J}V, V \rangle \, \mathrm{d}\mu.
\end{equation}

 Let us briefly define  $ \mathcal{J} $, which is the Jacobi operator.

 \begin{definition}[Jacobi Operator]

    The Jacobi operator is a second-order, elliptic, self-adjoint differential operator acting on normal vector fields. Its general form is:

\begin{equation}
    \mathcal{J} = \Delta^\perp + |A|^2 + \operatorname{Ric}_N(\nu, \nu),
\end{equation}

where $ \Delta $ is the Laplace-Beltrami operator on $ M $, $ A $ is the second fundamental form, and $ \operatorname{Ric}_N$ is the Ricci curvature of the ambient manifold $ N $ in the direction of the unit normal vector field $ \nu $ \citep{fischerschoen1980}.
     
 \end{definition}

A minimal surface is said to be stable if the second variation of area is non-negative for all compactly supported variation. Equivalently, a minimal surface is said to be stable if $ \mathcal{J} $ has non-negative spectrum.

\subsubsection{Einstein Metrics and Second-Variation}

Let $ (M^{4}, g) $ be a compact Einstein four-manifold. A variation of the metric is given by a symmetric $ (0, 2) $-tensor $ h \in S^{2}(T^{*}M) $. To isolate meaningful variations, we restrict attention to transverse-traceless tensors:

\begin{equation}
    \mathrm{TT}_g = \left\{ h \in S^2(T^*M) \mid \operatorname{Tr}_g(h) = 0,\; \delta_g h = 0 \right\},
\end{equation}

which preserve volume and are orthogonal to conformal transformations.

The second variation of the Einstein-Hilbert functional, restricted to $ \text{TT}_{g} $, is governed by the Lichnerowicz Laplacian \citep{galloway1989}:

\begin{equation}
    \Delta_L h = \nabla^* \nabla h + 2 \mathring{R}h,
\end{equation}

where $ \nabla^{*} \nabla $ is the rough Laplacian, and $ \mathring{R} $ is the curvature operator acting on symmetric 2-tensors, defined locally by:

\begin{equation}
    (\mathring{R}h)_{ij} = R_{ikjl} h^{kl}.
\end{equation}

The rough Laplacian will appear again later, warranting a formal definition.

\begin{definition}[Rough Laplacian]

    Let $ \nabla_{X} $ denote the covariant differentiation along the vector field $ X $. For a smooth section $ \phi \in \Gamma(E) $, , where $ E $ is a vector bundle over $ M $, the rough Laplacian is defined as:

    \begin{equation}
    \Delta \phi = - \operatorname{Tr}_{g}(\nabla^{2} \phi) = - \sum_{i=1}^{n} \left( \nabla_{e_i} \nabla_{e_i} \phi - \nabla_{\nabla_{e_i} e_i} \phi \right),
\end{equation}

   where $ \{e_i\} $ is a local orthonormal frame. The operator $ \Delta $ is second-order, elliptic, and self-adjoint.
    
\end{definition}

An Einstein metric is said to be linearly stable if the second variation is non-negative for all such $ h $. This is equivalent to the Lichnerowicz operator being non-negative on the space of TT-tensors.

\medskip

Both the Jacobi and Lichnerowicz operators are elliptic, self-adjoint, and have discrete spectra. The index of the structure is defined by the number of negative eigenvalues of the corresponding operator. It quantifies the number of independent directions of instability.

This spectral framework unifies the notion of stability across both minimal and Einstein geometries, and will serve as a foundation for our discussion or compactness and degeneration phenomena, in sections to come.

\section{Monotonicity and Local Rigidity}

Both minimal surfaces and Einstein manifolds obey natural monotonicity formulae, which impose strong constraints on the behaviour of geometric quantities under scaling. These formulae are not mere technicalities, but instead they encode curvature information and guide our understanding of the passage from local to global geometry in  analysis and convergence theory.

\subsection{Minimal Surfaces}

Let $ \Sigma \subset N $ be a minimal surface immersed in a Riemannian $ 3 $-manifold $ N $. For any point $ p \in \Sigma $ and sufficiently small $ r > 0 $, the function:

\begin{equation}
r \mapsto \frac{\operatorname{Area}(\Sigma \cap B(p, r))}{\pi r^2}
\end{equation}

is monotone non-decreasing up to curvature correction terms when the ambient space $ N $ is not flat. In the case $ N = \mathbb{R}^{3} $, the quantity is strictly monotonic and becomes constant if and only if $ \Sigma $ is a plane in a neighbourhood near $ p $ \citep{meeks2012survey, brendle2023minimal, white2016lecturesminimalsurfacetheory}. 

This reflects the second-order minimality condition satisfied by $ \Sigma $, where the rate of area growth relative to $ r^2 $ near a point quantifies how close the surface is to being flat. Thus, it serves as a regularity tool and as a control on local geometry.

\subsection{Einstein Manifolds}

Let $ (M^{4}, g) $ be an Einstein manifold. For any $ p \in M $ and sufficiently small $ r > 0 $, the normalised volume ratio:

\begin{equation}
r \mapsto \frac{\operatorname{Vol}(B(p, r))}{r^4},
\end{equation}

is almost monotone non-increasing as $ r \rightarrow 0 $, with equality if and only if the metric is locally flat in a neighbourhood of $ p $. This result follows from the Bishop-Gromov volume comparison theorem, which applies under a lower bound on Ricci curvature. Since Einstein manifolds satisfy $\mathrm{Ric}_g = \lambda g$, they fall naturally into the scope of this comparison principle.

In this context, deviation from constancy in the normalised volume function measures how the Einstein condition influences local volume growth. As in the minimal surface case, the monotonicity property provides both a local rigidity result and a powerful analytical tool. It plays a central role in compactness and convergence theorems, particularly within the Cheeger–Gromov theory of collapsing sequences of Riemannian manifolds.

\medskip

The presence of monotonicity formulae in both settings illustrates a deeper structural similarity, with each expressing a local rigidity. Both formulae become equality statements in flat geometry and deviation from monotonicity measures curvature in a quantifiable way. These monotonicity restrictions thus play a dual role: they regulate geometry and hint at underlying analytical principles governing stability, regularity, and convergence in geometric flows.

\section{Epsilon-Regularity and Energy Thresholds}

Epsilon-regularity is a foundational tool in geometric analysis, particularly in the study of geometric partial differential equations. Though this tool represents a concept which is rich in analytic formalism, the principle is simple: under smallness assumptions on an integral curvature quantity, one can deduce strong pointwise control of the geometry. In this way, integral control implies regularity, and singularities may be ruled out below critical energy thresholds. We refer the reader to \citep{Tao_2009b} for an accessible introduction to this principle in broader contexts.

\subsection{Minimal Surfaces}

For minimal surfaces, a classical result due to Choi and Schoen proposes establishes that singularities are excluded in regions where the total curvature is sufficiently small. In particular, embedded minimal surfaces in a Riemannian three-manifold enjoy pointwise curvature bounds, provided the $ L^{2} $-norm of the second fundamental form remain below a universal threshold \citep{haslhofer2024lecturesmeancurvatureflow}.

\begin{theorem}[Choi-Schoen]
    There exist constants $\varepsilon > 0 $ and $ C > 0 $ such that the following holds. Let $ \Sigma $ be an embedded minimal surface in a Riemannian 3-manifold $ N $, and let $  p \in \Sigma $. If $ r > 0 $ is sufficiently small and:

    \begin{equation}
        \int_{\Sigma \cap B(p, r)} |A|^2 < \varepsilon,
    \end{equation}

    then:

    \begin{equation}
        \sup_{\Sigma \cap B(p, r/2)} |A|^2 \leq \frac{C}{r^2},
    \end{equation}

    where $|A|$ denotes the norm of the second fundamental form of $\Sigma$ \citep{ChoiSchoen}.
    
\end{theorem}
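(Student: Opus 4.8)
The plan is to argue by contradiction, via a point-selection (blow-up) argument whose analytic inputs are Simon's monotonicity formula --- in the curvature-corrected form valid for small $r$ discussed above --- and Simons' identity for the second fundamental form of a minimal hypersurface. The first observation is that both hypotheses are scale-invariant: in dimension two $\int_{\Sigma\cap B}|A|^2\,\mathrm{d}A$ is invariant under rescaling the ambient metric, since $|A|^2$ has dimension $(\text{length})^{-2}$ and $\mathrm{d}A$ has dimension $(\text{length})^{2}$, while $r^2\sup|A|^2$ is manifestly invariant. Hence, after rescaling by $1/r$, it suffices to find $\varepsilon>0$ such that: if $\Sigma$ is embedded minimal in $N$, $p\in\Sigma$, the ambient geometry on $B(p,1)$ is sufficiently close to flat, and $\int_{\Sigma\cap B(p,1)}|A|^2<\varepsilon$, then $\sup_{\Sigma\cap B(p,1/2)}|A|^2\le C$ for a universal $C$; the hypothesis ``$r$ sufficiently small'' corresponds exactly to the ambient metric on $B(p,1)$ being nearly flat after this rescaling.

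Suppose this fails: there are embedded minimal surfaces $\Sigma_i\subset N_i$ with $p_i\in\Sigma_i$, ambient metrics on $B(p_i,1)$ converging in $C^\infty$ to the Euclidean one, $\int_{\Sigma_i\cap B(p_i,1)}|A_i|^2\to 0$, but $\sup_{\Sigma_i\cap B(p_i,1/2)}|A_i|^2\to\infty$. Apply the point-picking lemma to $x\mapsto \mathrm{dist}(x,\partial B(p_i,1))^2\,|A_i|^2(x)$ on $\Sigma_i\cap B(p_i,1)$ to obtain points $x_i$ at which a near-maximum is attained; set $\lambda_i=|A_i|(x_i)\to\infty$ and rescale the ambient metric about $x_i$ by $\lambda_i$. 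The standard output of point-picking is that the rescaled surfaces $\widetilde\Sigma_i$ satisfy $|\widetilde A_i|(x_i)=1$ and $|\widetilde A_i|\le 2$ on intrinsic balls about $x_i$ of radius $R_i\to\infty$, while the rescaled ambient metrics converge in $C^\infty_{\mathrm{loc}}$ to the flat metric on $\mathbb{R}^3$.

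It remains to take a limit and reach a contradiction. Embeddedness together with the monotonicity formula (this is the step where embeddedness is genuinely used) yields uniform bounds on the area ratios of $\widetilde\Sigma_i$ over fixed balls; combined with $|\widetilde A_i|\le 2$ and interior elliptic estimates for the minimal surface equation in graphical gauge, a subsequence converges in $C^\infty_{\mathrm{loc}}$ to a smoothly embedded complete minimal surface $\Sigma_\infty\subset\mathbb{R}^3$ with $|A_\infty|(x_\infty)=1$, so $\Sigma_\infty$ is not a plane. On the other hand, for any fixed ball $B$, $\int_{\Sigma_\infty\cap B}|A_\infty|^2=\lim_i\int_{\widetilde\Sigma_i\cap B}|\widetilde A_i|^2\le\liminf_i\int_{\Sigma_i\cap B(p_i,1)}|A_i|^2=0$, using smooth convergence and scale-invariance of the total curvature; letting $B\uparrow\mathbb{R}^3$ gives $A_\infty\equiv 0$, so $\Sigma_\infty$ is a plane --- a contradiction. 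Unwinding the rescaling turns the universal bound on $B(p,1/2)$ into the stated estimate $\sup_{\Sigma\cap B(p,r/2)}|A|^2\le C/r^2$. I expect the compactness step to be the main obstacle: one must rule out loss of area (multiplicity $>1$) in the blow-up limit and control the ambient curvature corrections in both the monotonicity formula and Simons' identity --- precisely where embeddedness and the smallness of $r$ are indispensable. An alternative is to skip the contradiction and run Moser iteration directly on the differential inequality $\Delta|A|^2\ge -c(|A|^4+|A|^2)$ coming from Simons' identity, but absorbing the $|A|^4$ term rigorously still requires an a priori smallness input equivalent to the $\varepsilon$-hypothesis.
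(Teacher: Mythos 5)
Your proposal is correct in outline but follows a genuinely different route from the paper. The paper's proof is the direct analytic one: Simons' inequality for $|A|^2$, a cut-off function and integration by parts to convert the small $L^2$ hypothesis into an $L^4$ bound, then the Michael--Simon Sobolev inequality and Moser iteration to reach the mean-value estimate $\sup_{B(p,r/2)}|A|^2\le C r^{-2}\int_{\Sigma\cap B(p,r)}|A|^2$, with the smallness of $\varepsilon$ used only at the end to absorb constants. Your rescaling, point-picking and blow-up-by-contradiction argument is instead essentially the original Choi--Schoen strategy: scale invariance of $\int|A|^2\,\mathrm dA$ in dimension two, selection of a near-maximum of $\operatorname{dist}(\cdot,\partial B)^2|A|^2$, blow-up to a limit with $|A_\infty|(x_\infty)=1$, and vanishing total curvature forcing flatness. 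What your approach buys is conceptual economy and a transparent role for ``$r$ sufficiently small'' (near-flatness of the rescaled ambient metric); what the paper's approach buys is an effective, quantitative inequality whose constant depends explicitly on the ambient curvature bound, which is the form used later for the compactness theorems.

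One concrete soft spot: the claim that ``embeddedness together with the monotonicity formula yields uniform area-ratio bounds'' for the blow-ups is neither justified nor needed. The theorem has no area hypothesis, and monotonicity (which holds for immersed, indeed stationary, surfaces, so embeddedness is not really what is being used there) controls area ratios at small radii by area ratios at larger radii, which you do not control. Fortunately the contradiction does not require a complete limit surface or any area bound: since $|\widetilde A_i|\le 2$ near the marked points and the rescaled metrics converge to the Euclidean one, the component of $\widetilde\Sigma_i$ through $x_i$ contains a graph over a disk of uniform radius with uniform elliptic estimates; these graphs subconverge smoothly to a minimal graph with $|A_\infty|(x_\infty)=1$, while scale invariance gives $\int|A_\infty|^2=\lim_i\int|\widetilde A_i|^2\le\lim_i\int_{\Sigma_i\cap B(p_i,1)}|A_i|^2=0$ on that graph, so $A_\infty\equiv 0$ --- a contradiction already at the local level. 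With that repair (or with a genuine multiplicity-controlled lamination limit in place of the area-ratio claim), your argument is complete.
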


This result provides a crucial regularity criterion: if a minimal surface exhibits sufficiently small curvature energy in a region, then the pointwise curvature remains controlled on a smaller concentric ball. This tool underlies many compactness results and blow-up analyses in minimal surface theory.

\begin{proof}[Sketch of Proof]

The proof follows a classical bootstrapping technique, starting from small $ L^{2} $-control on the second fundamental form, and deriving a pointwise curvature bound.

Let \( \Sigma \subset N^3 \) be an embedded minimal surface. The second fundamental form $ A $ satisfies the Simons inequality:

\begin{equation*}
    \tfrac{1}{2} \Delta |A|^2 \geq |\nabla A|^2 - C_1 |A|^4 - C_2 |A|^2,
\end{equation*}

    where $C_{1},C_{2}$ depend only on a bound for $\|{\mathrm{Rm}}_{N}\|_{C^{1}}$ in the chosen coordinate patch

    Let $ \phi \in C^{\infty}_{c}(B_{\Sigma}(p,r)) $ be a smooth cut-off function satisfying:

    \begin{equation*}
          0\le\phi\le1,\quad
  \phi\equiv1\text{ on }B(p,r/2),\quad
  |\nabla\phi|\le \frac{2}{r}.
    \end{equation*}

    Multiply Simons inequality by $ \phi^{2} $ and integrate over $ \Sigma $:

    \begin{align*}
    \int_{\Sigma} \phi^2 \Delta |A|^2 
    &\geq \int_{\Sigma} \left( 2\phi^2 |\nabla A|^2 - 2C_1 \phi^2 |A|^4 - 2C_2 \phi^2 |A|^2 \right), \\
    \text{while integration by parts yields:}
    \int_{\Sigma} \phi^2 \Delta |A|^2 
    &= -\int_{\Sigma} \langle \nabla |A|^2, \nabla (\phi^2) \rangle 
    \leq \int_{\Sigma} 4\phi |\nabla \phi| |A| |\nabla A|.
\end{align*}

    Applying the Cauchy-Schwarz and Young inequalities gives:

    \begin{equation*}
    4\phi |\nabla \phi| |A| |\nabla A| 
    \leq \tfrac{1}{2} \phi^2 |\nabla A|^2 + \tfrac{8}{r^2} |A|^2.
\end{equation*}

    Combining these estimates:

    \begin{equation*}
    \int_{\Sigma} \phi^2 |\nabla A|^2 + C_1 \int_{\Sigma} \phi^2 |A|^4 
    \leq \frac{C}{r^2} \int_{\Sigma \cap B(p, r)} |A|^2.
\end{equation*}

    Note that in particular:

    \begin{equation*}
    \int_{\Sigma \cap B(p, r)} |A|^4 \leq \frac{C}{r^2} \int_{\Sigma \cap B(p, r)} |A|^2.
\end{equation*}

    The final step is to apply a Moser-type iteration argument, which is a bootrastrapping process for elliptic PDEs. By the Michael-Simon Sobolev inequality on minimal surfaces:

    \begin{equation*}
    \left( \int (\phi |A|)^4 \right)^{1/2} \leq C_S \int \left| \nabla(\phi |A|) \right|^2.
\end{equation*}

    With the previous $ L^{4} $-bound, one obtains a recursive estimate:

    \begin{equation*}
    |A| _{L^{2p}(B(p, r/2))} \leq \frac{C_p}{r^{\alpha_p}} |A|_{L^p(B(p, r))}, \quad p = 2, 4, 8, \dots
\end{equation*}

    With Moser-type interation, we send $ p \to \infty $ and deduce the desired bound:

    \begin{equation*}
    \sup_{B(p, r/2)} |A|^2 \leq \frac{C}{r^2} \int_{B(p, r)} |A|^2.
\end{equation*}

    Finally, choosing $ \varepsilon < 1/C $, one obtains:

    \begin{equation*}
    \sup_{B(p, r/2)} |A|^2 < \frac{1}{r^2},
\end{equation*}

    which implies the claimed pointwise estimate:

    \begin{equation*}
    \sup_{B(p, r/2)} |A|^2 \leq C r^{-2}.
\end{equation*}

    \citep{chen1984total, Simon1983, brendle2023minimal}
    
\end{proof}

\subsection{Einstein Manifolds}

An analogous epsilon-regularity result holds for Einstein four-manifolds. In foundational work by Anderson, and later contributions by Cheeger-Tian, Nakajima, and Gao, it was shown that pointwise curvature bounds can be derived from the integral $ L^{n/2}$-control of the Riemann curvature tensor on small balls. In dimension four, the critical exponent $n/2$ for curvature control coincides with $2$.

\begin{theorem}[Anderson, Cheeger–Tian, Nakajima, Gao]

    There exist constants \( \varepsilon > 0 \) and \( C > 0 \) such that the following holds. Let \( (M^4, g) \) be an Einstein manifold, and let \( p \in M \). If \( r > 0 \) is sufficiently small and:

    \begin{equation}
    \int_{B(p, r)} | \mathrm{Rm} |^2 < \varepsilon,
\end{equation}

    then:

    \begin{equation}
    \sup_{B(p, r/2)} | \mathrm{Rm} | \leq \frac{C}{r^2},
\end{equation}

    where $ |Rm| $ denotes the pointwise norm of the Riemann curvature tensor. \citep{Anderson, CheegerTian, nakajima1988compactness, gao1990einstein}
\end{theorem}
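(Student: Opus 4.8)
The plan is to mirror the structure of the Choi--Schoen argument: replace the Simons inequality by the elliptic system satisfied by the curvature tensor of an Einstein metric, and then run the same cut-off/integration-by-parts/Moser-iteration machinery. First I would record the key differential identity: for an Einstein metric, the second Bianchi identity yields an elliptic equation for $\mathrm{Rm}$ of the form
\begin{equation*}
\Delta \mathrm{Rm} = \mathrm{Rm} * \mathrm{Rm} + \lambda \, \mathrm{Rm},
\end{equation*}
(where $*$ denotes a universal contraction and $\lambda$ is the Einstein constant), and hence the Kato-type inequality
\begin{equation*}
\tfrac{1}{2}\Delta |\mathrm{Rm}|^2 \geq |\nabla \mathrm{Rm}|^2 - C_1 |\mathrm{Rm}|^3 - C_2 |\mathrm{Rm}|^2 .
\end{equation*}
This is the exact analogue of the Simons inequality used above; note the cubic term $|\mathrm{Rm}|^3$ in place of the quartic $|A|^4$, which is precisely what makes $n/2 = 2$ the critical integrability exponent in dimension four.

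Next I would introduce a smooth cut-off $\phi$ supported in the geodesic ball $B(p,r)$ with $\phi \equiv 1$ on $B(p,r/2)$ and $|\nabla\phi| \le 2/r$, multiply the Kato inequality by $\phi^2$, integrate over $M$, and integrate by parts exactly as in the minimal-surface proof. Cauchy--Schwarz and Young's inequality absorb the gradient term, producing
\begin{equation*}
\int_M \phi^2 |\nabla \mathrm{Rm}|^2 + \int_M \phi^2 |\mathrm{Rm}|^3 \leq \frac{C}{r^2} \int_{B(p,r)} |\mathrm{Rm}|^2 + C_1 \int_{B(p,r)} |\mathrm{Rm}|^3 .
\end{equation*}
Here the first difference from the surface case appears: the cubic self-interaction term $\int \phi^2 |\mathrm{Rm}|^3$ must be reabsorbed into the left-hand side. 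This is done via the Sobolev inequality applied to $\phi|\mathrm{Rm}|^{1/2}$ together with the smallness hypothesis $\int_{B(p,r)}|\mathrm{Rm}|^2 < \varepsilon$: choosing $\varepsilon$ small compared to the Sobolev constant makes the cubic term a small multiple of $\int \phi^2|\nabla\mathrm{Rm}|^2$, which can then be moved to the left. The upshot is the reverse-Hölder-type bound $\int_{B(p,r/2)} |\mathrm{Rm}|^3 \le \tfrac{C}{r^2}\int_{B(p,r)}|\mathrm{Rm}|^2$, the starting point for iteration.

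Finally I would run Moser iteration on the subsolution $|\mathrm{Rm}|$ of the elliptic inequality, using a local Sobolev inequality on $(M^4,g)$ valid on small balls (which requires a lower Ricci bound and a volume noncollapsing estimate on $B(p,r)$). Iterating over dyadic radii and exponents $p = 2,4,8,\dots$ yields
\begin{equation*}
\sup_{B(p,r/2)} |\mathrm{Rm}| \leq \frac{C}{r^2}\left(\int_{B(p,r)}|\mathrm{Rm}|^2\right)^{1/2} < \frac{C\sqrt{\varepsilon}}{r^2},
\end{equation*}
which is the claimed estimate after adjusting constants. The main obstacle — and the substantive point where this argument is harder than Choi--Schoen — is establishing the Sobolev inequality with a uniform constant on the small ball $B(p,r)$: unlike the minimal-surface setting, where the Michael--Simon inequality is essentially automatic, here one needs an a priori volume lower bound $\mathrm{Vol}(B(p,r)) \ge c\,r^4$ (noncollapsing), which does not follow from the Einstein condition alone and must either be assumed or extracted from an $\varepsilon$-regularity-type bootstrap on the harmonic radius (as in Anderson's original argument). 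Handling that noncollapsing input rigorously, rather than taking it for granted, is where most of the real work lies.
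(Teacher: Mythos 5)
Your outline is essentially sound and reaches the same analytic core as the paper --- the elliptic system $\Delta \mathrm{Rm} = \mathrm{Rm} * \mathrm{Rm}$ coming from the second Bianchi identity plus the Einstein condition, followed by a Moser-type bootstrap from $L^2$ to $L^\infty$ --- but you route the argument differently. The paper's sketch first invokes Anderson's harmonic-coordinate theorem: smallness of $\int_{B(p,r)}|\mathrm{Rm}|^2$ yields harmonic coordinates in which $g$ is $C^{1,\alpha}$-close to the Euclidean metric, so the Laplacian is uniformly elliptic with controlled constants and the desired sup bound follows from standard interior $L^p$-to-$L^\infty$ elliptic estimates. You instead work intrinsically, mimicking Choi--Schoen: a Bochner/Kato inequality for $|\mathrm{Rm}|^2$ with cubic nonlinearity, cut-off and integration by parts, absorption of the cubic term via Sobolev plus smallness of $\varepsilon$ (note the absorption is via Sobolev applied to $\phi|\mathrm{Rm}|$ combined with H\"older against $\|\mathrm{Rm}\|_{L^2(B)}<\sqrt{\varepsilon}$, not to $\phi|\mathrm{Rm}|^{1/2}$), then Moser iteration. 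The trade-off is exactly the one you identify: your intrinsic iteration needs a uniform Sobolev constant, i.e.\ a volume lower bound $\mathrm{Vol}(B(p,r))\gtrsim r^4$, which the Einstein condition does not supply; in the paper's approach this input is hidden inside the harmonic-coordinate step, since Anderson's harmonic radius estimate itself presupposes noncollapsing. So both sketches quietly carry the same hypothesis, and you are more candid about it --- indeed, the statement as attributed (including Cheeger--Tian) is the collapse-free version, whose proof without a noncollapsing assumption is substantially deeper than either bootstrap and is not captured by this argument. As a proof of the standard noncollapsed $\varepsilon$-regularity theorem, your proposal is a legitimate alternative to the paper's, provided the volume lower bound is stated as an explicit hypothesis or derived from a harmonic-radius argument as you indicate.
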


This theorem states that if the $ L^{2} $-norm of the total curvature is sufficiently small in a ball $ B(p, r) $, then the curvature remains uniformly bounded on the smaller concentric ball $ B(p, r/2) $, proving control over the geometry of Einstein manifolds under integral curvature bounds.

\begin{proof}[Sketch of Proof]

Following the method used for proving the Choi-Schoen theorem, we  again follow the standard bootstrap argument, moving from integral to pointwise control.

A key step is the use of harmonic coordinates. Anderson \citep{Anderson} showed that if $ \int_{B(p,r)} |\mathrm{Rm}|^2 < \varepsilon $ for sufficiently small $ \varepsilon $, then there exist harmonic coordinates $ \{x_i\} $ on $ B(p, r) $ such that the metric satisfies:
\begin{equation}
    \| g_{ij} - \delta_{ij} \|_{C^{1,\alpha}(B(p, r))} \leq \tfrac{1}{2}, \qquad \Lambda^{-1} \delta_{ij} \leq g_{ij} \leq \Lambda \delta_{ij},
\end{equation}

for a uniform $ \Lambda > 1 $.  In these coordinates, the Laplace–Beltrami operator is uniformly elliptic with
controlled constants.

On an Einstein manifold, the Riemann curvature tensor satisfies an elliptic equation of the form:

\begin{equation}
    \Delta \mathrm{Rm} = \mathrm{Rm} * \mathrm{Rm},
\end{equation}

where $ \Delta $ is the rough Laplacian and $ * $ denotes bilinear contractions of curvature terms \footnote{See \citep{besse1987einstein} for more details.} This identity follows from the second Bianchi identity together with the Einstein condition $ \mathrm{Ric} = \lambda g $.

The equation above constitutes a non-linear elliptic system for $ \mathrm{Rm} $, with quadratic lower-order terms. Since the equation is elliptic and the background metric is well-controlled in harmonic coordinates, classical elliptic regularity theory, such as Moser iteration or interior \( L^p \)-to-\( L^\infty \) estimates, can be applied \citep{bando1989construction, gilbarg2001elliptic}. This gives:

\begin{equation}
    \sup_{B(p, r/2)} | \mathrm{Rm} | \leq \frac{C}{r^2} \left( \int_{B(p, r)} | \mathrm{Rm} |^2 \right)^{1/2}.
\end{equation}

If the integral is smaller than a universal threshold \(\varepsilon\), we may absorb the constant into the right-hand-side to reach the desired estimate.

\end{proof}

\section{Compactness and Convergence}

In both minimal surface theory and Einstein geometry, compactness theorems provide essential structure in the presence of degeneration. Sequences of minimal surfaces or Einstein metrics may develop singularities in regions of concentrated curvature; however, under suitable geometric bounds, these singularities are controlled, and the degeneration is highly structured rather than chaotic.

\subsection{Minimal Surfaces}

Compactness theory for minimal surfaces has been developed in increasingly refined ways over the past two decades. An important result due to Sharp, Chodosh, Ketover, and Maximo shows that under uniform area and Morse index bounds, a sequence of minimal surfaces converges to a smooth limiting surface, with their topological type controlled.

\begin{theorem}[Sharp-Chodosh-Ketover-Maximo]

    Let $ \{ \Sigma_i \} $ be a sequence of closed, embedded minimal surfaces in a Riemannian 3-manifold $ (N, g) $, such that:

    \begin{equation}
    \operatorname{Area}(\Sigma_i) \leq C, \qquad \operatorname{index}(\Sigma_i) \leq C.
\end{equation}

    Then, after passing to a subsequence, $ \Sigma_i \to \Sigma_\infty \subset N  $ smoothly on compact subsets. Furthermore, the genus of $ \Sigma_i  $ is uniformly bounded, and the limit $ \Sigma_{\infty}  $ is a smooth, embedded minimal surface. \citep{Chodosh_2017}
    
\end{theorem}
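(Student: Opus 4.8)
The plan is to combine the local $\varepsilon$-regularity theory for minimal surfaces (via a Choi–Schoen-type estimate, already available to us) with a Gauss–Bonnet argument that converts the index bound into a total-curvature bound, and then to analyze the finitely many points where curvature concentrates. First I would establish the crucial analytic input: a uniform bound on the total curvature $\int_{\Sigma_i} |A|^2\, d\mu$. This does not follow from the area bound alone, but it does follow from the index bound together with a logarithmic cutoff / stability argument — away from a controlled number of small balls, each $\Sigma_i$ is stable, and on stable minimal surfaces in a $3$-manifold with bounded geometry one has a local $L^2$-curvature estimate (this is essentially Schoen's curvature estimate for stable surfaces). Summing the contributions, and using the area bound to control the number of such balls at a fixed scale, yields $\int_{\Sigma_i}|A|^2 \le C'$ for a constant $C'$ depending only on $C$ and the geometry of $(N,g)$.

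Second, with uniform area and uniform total curvature in hand, I would run the standard point-picking/blow-up argument. Define, for each $i$, the finite set $\mathcal{S}_i$ of points where $|A|^2$ integrates to at least $\varepsilon$ (the Choi–Schoen threshold) on arbitrarily small balls; the total-curvature bound forces $|\mathcal{S}_i| \le C'/\varepsilon$, so after passing to a subsequence $\mathcal{S}_i \to \mathcal{S} = \{p_1,\dots,p_K\}$ with $K \le C'/\varepsilon$. Away from $\mathcal{S}$, the $\varepsilon$-regularity theorem (the Choi–Schoen estimate proved above) gives uniform local $C^2$ — hence, by elliptic bootstrapping on the minimal surface equation, uniform local $C^\infty$ — bounds on the $\Sigma_i$. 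Arzelà–Ascoli then produces a subsequential limit $\Sigma_\infty$ which is a smooth embedded minimal surface on $N \setminus \mathcal{S}$, with the convergence smooth and of some finite multiplicity $m$ on compact subsets of $N \setminus \mathcal{S}$.

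Third, I would promote this to a statement on all of $N$: the monotonicity formula for minimal surfaces (Section 3) bounds the area density at each $p_j$, so $\Sigma_\infty$ has locally finite area near each $p_j$ and extends, by Allard-type removable-singularity results for stationary varifolds together with the curvature bound, to a smooth embedded minimal surface across $\mathcal{S}$. For the genus bound I would invoke Gauss–Bonnet: $\int_{\Sigma_i} K_{\Sigma_i}\, d\mu = 2\pi \chi(\Sigma_i)$, and the Gauss equation rewrites $K_{\Sigma_i}$ in terms of $|A|^2$ and the ambient sectional curvature, so the uniform bounds on area and on $\int |A|^2$ translate directly into a uniform bound on $|\chi(\Sigma_i)|$, hence on the genus. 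A local version of the same computation, comparing the genus of $\Sigma_i$ with that of $\Sigma_\infty$ plus the genus "lost" into each concentration point (controlled by the amount of curvature $\ge \varepsilon$ there), shows no genus escapes to infinity.

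The main obstacle is the first step — extracting the uniform total-curvature bound $\int_{\Sigma_i}|A|^2 \le C'$ from the index bound. The index bound only says $\Sigma_i$ is stable outside some $\mathrm{index}(\Sigma_i) \le C$ balls, but one must choose the radii of these balls, show they can be taken uniformly small, and rule out curvature accumulating on the stable part via a delicate logarithmic-cutoff test function in the stability inequality; controlling how these bad balls interact (they may overlap or cluster) and ensuring the count of curvature-concentration scales stays finite is the technical heart of the argument, and is where the careful work of Chodosh–Ketover–Maximo is really needed.
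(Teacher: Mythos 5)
Your proposal is correct in outline and rests on the same core ingredients as the paper's proof -- the index bound yielding stability outside at most $C$ balls, Schoen's curvature estimate for stable surfaces, smooth subsequential convergence away from finitely many points, and Gauss--Bonnet for the genus -- but it is organised along a genuinely different route. The paper works directly with \emph{pointwise} curvature bounds: Schoen's estimate on the stable complement of the $\le C$ unstable balls gives a uniform bound on $|A|$ there, Allard compactness (plus the area bound and embeddedness) then gives smooth convergence away from at most $C$ points, and the genus is bounded by Gauss--Bonnet. You instead first convert the index and area bounds into a uniform \emph{total curvature} bound $\int_{\Sigma_i}|A|^2\le C'$, then run a concentration-point analysis with the Choi--Schoen $\varepsilon$-regularity theorem, and finally extend the limit across the concentration points via monotonicity and a removable-singularity argument. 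Each route buys something: the paper's is shorter and avoids the delicate log-cutoff argument needed for the total curvature bound (which you rightly identify as the technical heart, and which is precisely where Chodosh--Ketover--Maximo invest their effort); your route, on the other hand, makes two steps cleaner that the paper glosses over -- the genus bound, which in the paper is claimed via ``$\int|A|^2 \le \operatorname{Area}\cdot\sup|A|^2$'' even though $\sup|A|$ is only controlled away from the unstable balls, follows honestly from your global $L^2$ bound, and the extension of $\Sigma_\infty$ across the bad points, which the paper does not address, is handled explicitly by your removable-singularity step. One small caution: smooth convergence away from the concentration points can occur with multiplicity $m>1$ in general, so your phrase ``finite multiplicity $m$'' is the right level of precision, and any claim of multiplicity one (as the paper suggests via ``area bound and embeddedness prevent multiplicity'') would require an additional hypothesis such as positive ambient Ricci curvature or a one-sidedness argument.
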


This theorem shows that uniform control of area and index excludes wild topological degeneration. The result plays a foundational role in variational theory, particularly in contexts such as the existence of minimal surfaces in 3-manifolds with positive Ricci curvature.

\begin{proof}[Sketch of Proof]

    Let us begin by proving that the index bound will imply the existence of at most finitely many unstable regions. By definition, $ \operatorname{index}(\Sigma_{i}) \leq C $ implies that there are at most $ C $ linearly independent directions along which the second variation is negative. Equivalently, one can find at most $ C $ disjoint geodesic balls in $ \Sigma_{i} $ on which the surface is unstable. Outside of these geodesic balls, the surface is unstable \footnote{See \citep{Chodosh_2017}  for the local picture of degenerations of bounded-index hypersurfaces}.

    On any stable minimal surface in a 3-manifold, Schoen's curvature estimates give a pointwise bound:

    \begin{equation}
    |A|^2(p) \leq \frac{C}{\operatorname{dist}(p, \partial B)^2},
\end{equation}

    for intrinsic balls $ B \subset \Sigma_{i} $ disjoint from $ \partial \Sigma_{i} $. In particular, on compact susbets away from the unstable regions, one obtains a uniform curvature bound: 

    \begin{equation}
    \sup_{\Sigma_i \setminus \bigcup B_r(x_{i,k})} |A| \leq C.
\end{equation}

    We cover each $ \Sigma_{i} $ by finitely many intrinsic balls of radius $ r > 0 $, chosen to be sufficiently small so that no ball contains more than one unstable center. On each stable ball, our previous estimate gives:

    \begin{equation}
    \sup |A| \leq C(r).
\end{equation}
    
     Since there are at most $ C $ unstable balls, we conclude a uniform curvature bound outside of their union:

    \begin{equation}
    |A|_{\Sigma_i}(x) \leq C', \quad \forall x \in \Sigma_i \setminus \bigcup_{k=1}^{C} B_r(x_{i,k}),
\end{equation}

    where $ x_{i, k} $ denotes the centres of the unstable regions.

    Let us now see that there is smooth subsequential convergence away from points of issue. The uniform curvature bound on $ \Sigma_i \setminus \bigcup B_r(x_{i,k}) $, together with the area bound, allow us to apply Allard's compactness theorem to extract a subsequence converging in $ C^{\infty} $ on compact subsets of:

    \begin{equation}
    N \setminus \{ \text{at most } C \text{ points} \}.
\end{equation}

    The limit is a smooth minimal lamination. However, the area bound and embeddedness prevent multiplicity and ensure that the convergence is to a single embedded minimal surface away from the concentration points.

    In order to control the topology, we use the Gauss-Bonnet theorem to estimate the Euler characteristic:

    \begin{equation}
    2\pi \chi(\Sigma_i) = \int_{\Sigma_i} K_{\Sigma_i} \, dA = \int_{\Sigma_i} \left( \operatorname{Sec}_N(\tau) - \tfrac{1}{2} |A|^2 \right) \, dA,
\end{equation}

    where $ \operatorname{Sec}_{N}(\tau) $ is the ambient sectional curvature in the direction of the tangent plane. Since $ \operatorname{Sec}_{N} $ is bounded and $ \int |A|^{2} dA $ is controlled by $ \operatorname{Area} \cdot \sup |A|^2 $, this gives a uniform bound on $ \chi(\Sigma_{i}) $, and hence on the genus.
    
\end{proof}

\subsection{Einstein Four-Manifolds}

A similar philosophy holds for sequences of Einstein four-manifolds. Building on work by Anderson, Bando, Nakajima, Gao, Cheeger, and Tian, it has been shown that a sequence of Einstein manifolds with uniform topological and geometric bounds converges, modulo singularities, to a smooth limiting Einstein orbifold.

\begin{theorem}[Anderson-Bando-Nakajima-Gao]

    Let $ \{ (M_{i}, g_{i}) \} $ be a sequence of Einstein four-manifolds satisfying:

    \begin{equation}
    \chi(M_i) \leq C, \qquad \operatorname{Vol}(M_i, g_i) \geq C^{-1}, \qquad \operatorname{diam}(M_i, g_i) \leq C.
\end{equation}

    Then after passing to a subsequence, $ (M_i, g_i) \to (X, g_\infty) $ in the Gromov–Hausdorff sense, where $ X $ is a smooth Einstein orbifold. Furthermore, the number of diffeomorphism types of $ M_{i} $ is finite. \citep{Anderson, bando1989construction, gao1990einstein}
    
\end{theorem}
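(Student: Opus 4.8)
The plan is to run the now-standard ``\(\varepsilon\)-regularity plus blow-up'' machine, exactly as in the minimal surface compactness theorem above, with the elliptic estimate for \(\mathrm{Rm}\) on an Einstein manifold playing the role of Schoen's curvature estimate. In outline: (i) extract a uniform \(L^2\) bound on the curvature from the Euler characteristic hypothesis; (ii) apply the Einstein \(\varepsilon\)-regularity theorem to confine curvature concentration to a uniformly bounded set of points and obtain smooth convergence away from them; (iii) use the volume and diameter bounds to rule out collapsing, so that the limit is genuinely four-dimensional; (iv) analyse the degeneration at the concentration points by a blow-up argument, identifying them as orbifold singularities.

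\emph{Step 1.} I would first bound \(\int_{M_i}|\mathrm{Rm}_{g_i}|^2\) uniformly. In dimension four the Chern--Gauss--Bonnet formula reads
\begin{equation*}
\chi(M_i)=\frac{1}{8\pi^2}\int_{M_i}\Big(|W_{g_i}|^2-\tfrac12|\mathring{\mathrm{Ric}}_{g_i}|^2+\tfrac{1}{24}R_{g_i}^2\Big)\,\mathrm{vol}_{g_i}.
\end{equation*}
For an Einstein metric the traceless Ricci term vanishes, \(R_{g_i}\equiv 4\lambda_i\) is constant, and the orthogonal decomposition of the curvature tensor gives \(|\mathrm{Rm}_{g_i}|^2=|W_{g_i}|^2+\tfrac16 R_{g_i}^2\). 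Since \(\int|W_{g_i}|^2\ge 0\), the identity above forces both \(\lambda_i^2\,\mathrm{Vol}(M_i,g_i)\le 12\pi^2\chi(M_i)\) and \(\int_{M_i}|\mathrm{Rm}_{g_i}|^2\,\mathrm{vol}_{g_i}=8\pi^2\chi(M_i)+2\lambda_i^2\,\mathrm{Vol}(M_i,g_i)\le 32\pi^2 C=:\Lambda\). Thus the hypothesis \(\chi(M_i)\le C\) alone controls the total curvature energy, and in particular (with the volume lower bound) the Einstein constants \(\lambda_i\) are bounded.

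\emph{Steps 2--3.} Fix the constant \(\varepsilon\) of the Einstein \(\varepsilon\)-regularity theorem. The bound \(\int|\mathrm{Rm}_{g_i}|^2\le\Lambda\) means there are at most \(N:=\lfloor\Lambda/\varepsilon\rfloor\) points of each \(M_i\) where curvature concentrates at a definite scale; at every other point the \(\varepsilon\)-regularity theorem gives a pointwise bound on \(|\mathrm{Rm}_{g_i}|\). Because the metrics are Einstein with \(|\lambda_i|\) bounded, \(\mathrm{Ric}_{g_i}\) is uniformly bounded, so the diameter bound together with Bishop--Gromov yields a uniform lower bound on the volumes of unit balls --- the sequence is non-collapsed. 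On the complement of a controlled neighbourhood of the bad points, the local curvature bound and non-collapsing put us in the setting of Anderson's harmonic-coordinate compactness theorem; bootstrapping the Einstein elliptic system \(\Delta\,\mathrm{Rm}=\mathrm{Rm}*\mathrm{Rm}\) from the previous proof promotes this to \(C^\infty\) convergence. Passing to a subsequence, \((M_i,g_i)\to(X,g_\infty)\) in the Gromov--Hausdorff sense, smoothly on compact subsets of \(X\setminus\{p_1,\dots,p_k\}\) for some \(k\le N\), and \((X\setminus\{p_j\},g_\infty)\) is a smooth Einstein four-manifold.

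\emph{Step 4, and the main obstacle.} Near each \(p_j\) I would carry out the usual bubbling analysis: rescale \(g_i\) about points realising the curvature maxima near \(p_j\) so that \(|\mathrm{Rm}|\sim 1\), and pass to a pointed limit. Non-collapsing makes the limit a complete, non-flat, Ricci-flat four-manifold of Euclidean volume growth with finite \(L^2\) curvature (the energy being scale-invariant in dimension four); by the structure theory of such spaces (Bando--Kasue--Nakajima) it is asymptotically locally Euclidean, its unique end asymptotic to \(\mathbb{R}^4/\Gamma_j\) for a finite \(\Gamma_j\subset SO(4)\) acting freely on \(S^3\). Matching this ALE end back to the smooth region of \(X\) shows that a punctured neighbourhood of \(p_j\) is covered by a punctured ball, so \(p_j\) is an orbifold point with group \(\Gamma_j\); hence \(X\) is a smooth Einstein orbifold with finitely many singular points. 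The hard part --- where essentially all the work sits --- is the analysis of the annular necks of small curvature energy joining \(X\) to the bubbles (and, potentially, bubbles to sub-bubbles): one must show that only finitely many bubbles form, that no collapsing occurs along the necks, and that no volume is lost in the limit; these are the delicate points controlled by the \(\varepsilon\)-regularity estimate together with a careful volume-ratio and neck analysis. Finally, the finiteness of diffeomorphism types follows by a Cheeger-type argument: if infinitely many \(M_i\) were pairwise non-diffeomorphic, one passes to such a subsequence and uses the convergence above to exhibit all sufficiently large \(M_i\) as a single fixed smooth model --- the smooth part of \(X\) with standard balls reglued at the \(p_j\), or a fixed resolution thereof --- since the bubbles, having uniformly bounded \(L^2\) curvature, fall into a finite list of ALE models; this contradiction completes the argument.
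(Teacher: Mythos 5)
Your proposal is correct and follows essentially the same route as the paper's proof: an \(\varepsilon\)-regularity--based decomposition into a region of controlled curvature (handled by harmonic-coordinate/Cheeger--Gromov compactness and elliptic bootstrapping of \(\Delta\,\mathrm{Rm}=\mathrm{Rm}*\mathrm{Rm}\)) plus finitely many concentration points that blow up to Ricci-flat ALE spaces \(\mathbb{R}^4/\Gamma\), yielding a smooth Einstein orbifold limit, with finiteness of diffeomorphism types via the Anderson--Cheeger/Bando--Kasue--Nakajima type finiteness argument. If anything, your Step 1 (Chern--Gauss--Bonnet giving the uniform \(L^2\) curvature bound and the bound on the Einstein constants) makes explicit an ingredient the paper's sketch leaves implicit, and your emphasis on non-collapsing via Bishop--Gromov is the more accurate description of the ``thin'' regions, which under these hypotheses are curvature-concentration (bubbling) zones rather than genuinely collapsed ones.
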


As in the minimal surface case, degeneration is controlled. Singularities arise only at isolated points, and the limiting space retains a high degree of regularity elsewhere. These results are central in the study of moduli spaces of Einstein metrics.

\begin{proof}

    Fix a small $ \varepsilon $ and a volume threshold $ v_{0} > 0 $. For each $ p \in M_{i} $, define the curvature scale:

    \begin{equation}
    r_\varepsilon(p) = \sup \left\{ r \leq 1 \,\middle|\, \int_{B(p, r)} | \mathrm{Rm} |^2 \leq \varepsilon \right\}.
\end{equation}

    Decompose the manifold in the following way:

    \begin{equation}
    M_i^{>} = \left\{ p \in M_i \,\middle|\, \operatorname{Vol}(B(p, r_\varepsilon(p))) > v_0 r_\varepsilon(p)^4 \right\}, \qquad
    M_i^{\leq} = M_i \setminus M_i^{>}.
\end{equation}

    Here, $ M_{i}^{>} $ represents the thick region, where volume at scale $r_{\varepsilon}$ is non–collapsing, and $M_i^{\le}$ is the thin region (note, thick/thin decomposition will be officialy introduced in the next section, as it represents its own parallel).

    By $ \varepsilon $-regularity, every $ p \in M_{i}^{>} $ lies in a ball on which $ | \operatorname{Rm} | $ is uniformly bounded. Coupling this with the global diameter bound $ \operatorname{diam}(M_{i}) \leq C $, and the global non-collapsing $ \operatorname{Vol}(B(p, 1)) \geq C^{-1} $, this implies higher-order control via usual elliptic regularity.

    By Cheeger-Gromov-Hamilton compactness,  a subsequence converges smoothly (with multiplicity one) on $ M_{i}^{>} $:

    \begin{equation}
    (M_i^{>}, g_i) \longrightarrow (X_{\mathrm{reg}}, g_\infty),
\end{equation}

    where \( X_{\mathrm{reg}} \) is an open dense subset of the limiting Einstein orbifold.

    On the thin region $ M_{i}^{\leq} $, the geometry collapses along small-volume balls. The volume collapse at scale $ \varepsilon $ forces the pointed rescalings $ \bigl(B(p,r_{\varepsilon}(p)),r_{\varepsilon}(p)^{-2}g_i\bigr) $ to converge to a complete Ricci-flat ALE space $ \mathbb{R}^{4} / \Gamma $. Each of these limits contributes an isolated orbifold singularity of type $\mathbb{R}^4/\Gamma$.

    Gluing the smooth convergence on $ M_{i}^{>} $ with the orbifold asymptotics of $ M_{i}^{\leq} $ yields:

    \begin{equation*}
         (M_i, g_i) \longrightarrow (X, g_\infty),
    \end{equation*}

    where $X$ is a smooth Einstein orbifold with finitely many singular points.

    The final step is an appeal to an orbifold finiteness theorem (originally due to Anderson-Cheeger, and then refined by Bando-Kasue-Nakajima): \textit{ Given uniform bounds on $ \chi(M_{i}) $, $ \operatorname{Vol}(M_{i}) $, and $ \operatorname{diam}(M_{i}) $, there are only finitely many homeomorphism types in the smooth pre-limit sequence.} That is, the possible arrangements of finitely many orbifold singular points and the discrete data of their local groups $ \Gamma \subset O(4) $ admits only finitely  many combinations under these constraints.
    
\end{proof}

\section{Thick/Thin and Sheeted/Non-Sheeted Decomposition}

In geometric analysis, it is often fruitful to decompose a complicated space into regions of controlled and uncontrolled geometry. This dichotomy appears both in the theory of Einstein four-manifolds and in the study of minimal surfaces, whereby one seperates the domain into a thick (or sheeted) part with regular geometric behaviour, and a thin (or non-sheeted) part where curvature concentrates and singularities arise. These decompositions, though originating in distinct contexts, serve a parallel purpose of isolating regions amenable to compactness theorems from those requiring finer analysis.

\subsection{Thick/Thin Decomposition}
 
In the setting of Einstein four-manifolds, the thick/thin decomposition is based on local pointwise control of curvature energy and volume ratios. Fix a small constant $ \varepsilon > 0 $ (determined by epsilon-regularity results) to serve as a threshold for acceptable curvature concentration. At each $ p \in M $, define the regularity scale $ r_{\varepsilon(p)} $ as:

\begin{equation}
    r_\varepsilon(p) := \sup \left\{ r \in (0, 1] \;\middle|\; \int_{B(p, r)} |\mathrm{Rm}|^2 \leq \varepsilon \right\}
\end{equation}

This radius captures the largest scale at which curvature energy remains below the critical threshold. To distinguish collapsing versus non-collapsing behabiour, we examine the volume of the corresponding ball:

\begin{itemize}
    \item If $ \operatorname{Vol}(B(p, r_\varepsilon(p))) > V_0 \cdot r_\varepsilon(p)^4 $, then $ p $ lies in the thick region.
    \item Otherwise, $ p $ lies in the thin region, where volume collapse may occus and curvature can become unbounded.
\end{itemize}

This classification allows one to localise analysis: the thick region supports regularity theorems and compactness results, while the thin region demands delicate rescaling techniques and bubbling analysis.

This process is formalised via the following:

\begin{definition}[Thick/thin decomposition for Einstein 4-manifolds]
    Let $ (M^{4}, g) $ be an Einstein manifold. For fixed constants $ \varepsilon > 0 $ and $ V_0 > 0 $, define:

    \begin{align}
    r_\varepsilon(p) &:= \sup \left\{ r \in (0,1] : \int_{B(p, r)} |Rm|^2 \leq \varepsilon \right\}, \\
    M_{> V_0} &:= \left\{ x \in M : \operatorname{Vol}(B(x, r_\varepsilon(x))) > V_0 \cdot r_\varepsilon(x)^4 \right\}, \\
    M_{\leq V_0} &:= \left\{ x \in M : \operatorname{Vol}(B(x, r_\varepsilon(x))) \leq V_0 \cdot r_\varepsilon(x)^4 \right\}.
\end{align}
\end{definition}

\subsection{Sheeted/Non-Sheeted Decomposition}

For minimal surfaces, an analogous decomposition exists. Rather than focusing on volume growth, one tracks area concentration within stable regions. The goal is to distinguish between points where the surface behaves like multiple well-separated sheets versus regions where geometric degeneration may occur. 

Following the framework introduced by Song \citep{song2022morseindexbettinumbers}, fix a small radius $ r > 0 $ and a threshold $ n_{0} > 0 $. At each point $ p \in \Sigma $, define the stability radius $ s(p) \leq r $ as the largest radius such that the portion $ \Sigma \cap B(p, s(p)) $ is stable. Then we classify points as follows:

\begin{itemize}
    \item If \( \operatorname{Area}(\Sigma \cap B(p, s(p))) > n_0 \cdot s(p)^2 \), we say \( p \) lies in the non-sheeted region, indicating possible bubbling or curvature concentration.
    \item If \( \operatorname{Area}(\Sigma \cap B(p, s(p))) \leq n_0 \cdot s(p)^2 \), then \( p \) lies in the sheeted region, where the surface behaves regularly.
\end{itemize}

Intuitively, the sheeted region is like a surface composed of well-separated layers, or `sheets,' with predictable behaviour. In contrast, the non-sheeted region may exhibit folding, bunching, or bubbling, signaling geometric degeneration.

\begin{definition}[Sheeted/Non-Sheeted Decomposition for Minimal Surfaces]
Let \( \Sigma \subset (N^3, g) \) be a minimal surface. Fix constants \( r > 0 \) and \( n_0 > 0 \). Define:

\begin{align}
    s(p) &:= \sup \left\{ \tilde{r} \leq r : \Sigma \cap B(p, \tilde{r}) \text{ is stable} \right\}, \\
    \Sigma_{> n_0} &:= \left\{ x \in \Sigma : \operatorname{Area}(\Sigma \cap B(x, s(x))) > n_0 \cdot s(x)^2 \right\}, \\
    \Sigma_{\leq n_0} &:= \left\{ x \in \Sigma : \operatorname{Area}(\Sigma \cap B(x, s(x))) \leq n_0 \cdot s(x)^2 \right\}.
\end{align}
\end{definition}

\subsection{Conclusion}

Though these two decompositions arise in very different settings, they share a common philosophical role. Both seek to isolate regions of geometric regularity from those of potential degeneration. In the thick or sheeted regions, the geometry behaves predictably, and powerful compactness theorems apply. In the thin or non-sheeted regions, by contrast, curvature concentrates, topological change can occur, and delicate analysis is required.

\section{Reflections on the Parallels and Directions for Further Study}

The parallels proposed span the fields of topology, differential geometry, and geometric analysis. Their recurrence across seemingly disparate contexts suggests a deeper conceptual link between minimal surfaces and Einstein four-manifolds. Pursuing this intuition could yield fresh insights at the intersection of geometric analysis and global differential geometry.

Thus, we outline several potential directions for further study:

\begin{itemize}

    \item Characterising the class of Einstein four-manifolds that admit isometric minimal embeddings into higher-dimensional ambient spaces. Such a classification may generalise contructions arising from Hitchin's and LeBrun's work on self-dual manifolds and twistor theory \citep{Hitchin1981, LeBrun1991twistor, LeBrun1994Einstein}, and may provide some form of bridge between intrinsic and extrinsic geometry.
    
    \item Analyse whether these structural similarities carry over to other elliptical system, such as Yang–Mills fields or metrics of constant scalar curvature. Many of these systems may be viewed through the lens of moment maps in infinite-dimensional gauge-theoretic settings \citep{DonaldsonKronheimer, Salamon_1989}, suggesting a unifying analytical framework for studying regularity, moduli spaces, and bubbling phenomena.
    
    \item Classical minimal surface theory possesses a deep holomorphic structure via the Weierstrass–Enneper representation. On the Einstein side, complexified tools, such as the study of complex holonomy, self-duality, or twistor spaces, often reveal hidden integrable structures. Exploring a possible geometric bridge between these complexifications might uncover a shared algebraic or symplectic underpinning.

    \item Plateau and free-boundary problems for minimal surfaces can be compared with conformal, Robin-type, or Bartnik boundary conditions in the context of Einstein metrics \citep{AndersonBoundary, Bartnik1989}. Investigating conditions under which these boundary theories align could inform existence and regularity results, particularly in the presence of scalar curvature constraints.

\end{itemize}

Each of these questions present a compelling avenue for deeper mathematical investigation. In the remainder of this work, we choose to focus on the first question: establishing criteria under which an Einstein four-manifold may be realised as a minimal submanifold in a suitable higher-dimensional ambient space, constructing an explicit example to demonstrate this. 

\section{Embedding Einstein Four-Manifolds as Minimal Submanifolds}

In this section, we explore a concrete connection between Einstein geometry and the theory of minimal submanifolds. Specifically, we consider conditions under which an Einstein four-manifold may be isometrically immersed as a minimal submanifold in a higher-dimensional ambient space. This framework allows us to study Einstein manifolds using techniques from minimal surface theory, and provides a geometric realisation of the parallels explored throughout this exposition.

To prepare the ground, we begin by examining a classification result due to Derdziński, which is a stronger result, though reminscent of Jensen's theorem \citep{JensonTheorem}. This remarkable theorem shows that, under mild geometric assumptions, the Einstein condition forces a strong symmetry. This reduction paves the way for our embedding results by narrowing the class of manifolds to those whose structure is particularly amenable to minimal immersion techniques.

\subsection{Jensen–Derdziński Classification}\label{sec: Jensen-Derdzinski}

The defining feature of an Einstein manifold is constant Ricci curvature, which is a strong constraint to place on a group of objects. In 1969, Jensen discovered that all locally homogeneous Einstein four-manifolds are in fact locally symmetric. More recently, Derdzinski generalised this result, showing that if an Einstein four-manifold is self-dual, locally irreducible, and has non-zero scalar curvature, then it must also be locally symmetric (save for a short explicit list of exceptional Kahler-Einstein spaces). Thus the Einstein equation, when coupled with mild symmetry or chirality hypotheses, forces the manifold into an even narrower corner of the geometric menagerie.  

We will start by presenting Jensen's theorem as a warm-up, and then pave the way to the more general result.

\begin{definition}[Locally Symmetric Space]

    A Riemannian manifold $ (M, g) $ is locally symmetric if its Riemann curvature tensor is parallel:

    \begin{equation}
        \nabla R = 0
    \end{equation}
    
\end{definition}

\begin{theorem}[Jensen's Theorem]\label{thm:Jensen}
    Every locally homogeneous Riemannian Einstein four-manifold is locally symmetric \citep{JensonTheorem}. 
\end{theorem}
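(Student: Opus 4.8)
The plan is to reduce the statement to an algebraic problem about invariant metrics on homogeneous spaces, and then to dispatch the low-symmetry cases by a direct structural analysis of left-invariant Einstein metrics. First, since in this dimension local homogeneity permits passage to a homogeneous model, I would assume $(M^4,g)$ is a homogeneous Riemannian manifold, write $M = G/H$ with $G$ a connected Lie group acting effectively and transitively by isometries and $H$ the (compact) isotropy subgroup, and observe that the isotropy representation embeds $H \hookrightarrow \operatorname{SO}(4)$. The metric then corresponds to an $\operatorname{Ad}(H)$-invariant inner product on $\mathfrak{m} \cong \mathfrak{g}/\mathfrak{h}$, and both the Riemann and Ricci tensors become explicit polynomial expressions in the structure constants; the Einstein condition $\operatorname{Ric} = \lambda g$ and the local symmetry condition $\nabla R = 0$ thereby become competing algebraic conditions to be compared case by case.

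Second, I would organize the argument by $\dim H$, using the fact that the subalgebras of $\mathfrak{so}(4) \cong \mathfrak{so}(3)\oplus\mathfrak{so}(3)$ occur only in dimensions $0,1,2,3,4,6$. If $\dim H = 6$ then $H = \operatorname{SO}(4)$, the isotropy acts irreducibly with the full orthogonal symmetry, the metric has constant sectional curvature, and we are done. If $\dim H = 4$ then $H = \operatorname{U}(2) \subset \operatorname{SO}(4)$, $\mathfrak{m} \cong \mathbb{C}^2$ is $\operatorname{U}(2)$-irreducible so the metric is unique up to scale and automatically Einstein, and running through the possible eight-dimensional $G$ one finds only the symmetric spaces $\mathbb{CP}^2$, $\mathbb{CH}^2$, and flat $\mathbb{R}^4$. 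For $\dim H \in \{2,3\}$ I would decompose $\mathfrak{m}$ into isotypic summands for $H$ (so the invariant inner product has only a few free parameters), compute the Ricci tensor in a frame adapted to this decomposition, and check that imposing $\operatorname{Ric} = \lambda g$ leaves only the symmetric examples — products of space forms and the reducible or rank-one symmetric spaces with the prescribed isotropy.

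Third — and here lies the real work — I would treat the low-isotropy cases $\dim H \le 1$. When $\dim H = 0$, $M = G$ is a four-dimensional Lie group with a left-invariant Einstein metric; when $\dim H = 1$, $G$ is five-dimensional with circle isotropy in $\operatorname{SO}(4)$, and after reduction the analysis is of the same flavour. Before the classification I would trim the list: by the Alekseevskii–Kimelfeld theorem a homogeneous Ricci-flat manifold is flat, so $\lambda = 0$ is immediately locally symmetric; Myers' theorem makes the $\lambda > 0$ case compact; and the genuinely delicate range is $\lambda < 0$. I would then run through the classification of four-dimensional Lie algebras (the Bianchi list in dimension three together with the four-dimensional solvable extensions and the few non-solvable cases), splitting into unimodular and non-unimodular, and for each use Milnor-type formulas to write $\operatorname{Ric}$ against a diagonalizing frame. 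The equation $\operatorname{Ric} = \lambda g$ pins the structure constants down up to scaling, and in every surviving case one computes $\nabla R$ and finds it vanishes — the metric turning out to be (a quotient of) $\mathbb{H}^4$, $\mathbb{CH}^2$, $\mathbb{H}^2(-c)\times\mathbb{H}^2(-c)$, or $\mathbb{R}\times\mathbb{H}^3$, all symmetric.

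The main obstacle is exactly this last step. The bookkeeping of four-dimensional (and circle-isotropic five-dimensional) Lie algebras is lengthy, and the non-unimodular solvable cases are the subtle ones: there the Ricci form need not be sign-definite, and the Einstein equations couple the structure constants in ways that require care to solve completely. Ensuring that no exotic non-symmetric left-invariant Einstein metric slips through this case analysis is the crux of Jensen's theorem; by contrast, once $\dim H \ge 2$ the rigidity forced by the sizeable isotropy representation makes those cases comparatively routine.
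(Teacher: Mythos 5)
Your strategy is essentially Jensen's original 1969 argument -- reduce to a homogeneous model $G/H$, stratify by $\dim H$, and classify invariant Einstein metrics case by case -- which is precisely the route the paper deliberately avoids: the paper instead follows the curvature-tensor approach, showing that local homogeneity forces the eigenvalues of $W^{\pm}$ to be constant, that constancy of these eigenvalues forces $\nabla W^{\pm}=0$, and hence $\nabla R=0$ (since $s$ is constant and the trace-free Ricci part vanishes), with local symmetry then following from Ambrose--Singer. The two routes buy different things: yours, if completed, yields the full classification of the homogeneous Einstein four-geometries (space forms, $\mathbb{CP}^{2}$, $\mathbb{CH}^{2}$, products of surfaces of equal constant curvature), whereas the paper's argument is much shorter and needs no Lie-algebra bookkeeping, but concludes only $\nabla R = 0$ without producing the list. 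Your stratification of the isotropy dimensions ($0,1,2,3,4,6$ for subalgebras of $\mathfrak{so}(4)\cong\mathfrak{so}(3)\oplus\mathfrak{so}(3)$) and the use of Alekseevskii--Kimelfeld and Myers to trim the $\lambda=0$ and $\lambda>0$ cases are all sound.

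Two caveats keep this from being a complete proof as written. First, the opening reduction ``local homogeneity permits passage to a homogeneous model'' is a genuine theorem, not a formality: it holds in dimension at most four but fails from dimension five onward (Kowalski's examples), so you must either cite that fact or work instead with the transitive Lie algebra of local Killing fields via the Nomizu--Singer construction, which is what makes the algebraic setup legitimate for merely \emph{locally} homogeneous metrics. Second, the decisive step -- the $\dim H\le 1$ analysis, especially the non-unimodular solvable Lie algebras with $\lambda<0$ -- is only promised: you assert that ``in every surviving case one computes $\nabla R$ and finds it vanishes,'' but that computation is exactly where the content of Jensen's theorem lies, and your own closing paragraph concedes it. As a blueprint the proposal is faithful to a known correct proof; as a proof it defers its crux.
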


Jensen's original proof proceeded by a brute-force classification of four-dimensional Lie algebras. We will not present the proof here, but the keen reader may consult \citep{JensonTheorem} for the details.

This result has since been extended by Derdziński, who removed the homogeneity assumption while retaining the rigidity of symmetry under weaker geometric hypotheses.

\begin{theorem}[Derdziński Rigidity]
Let $ (M^{4}, g) $ be a compact, oriented Einstein four-manifold. Suppose $ g $ is self-dual, locally irreducible, and has nonzero scalar curvature. Then $ (M, g) $ is locally symmetric, except in the following two Kähler–Einstein cases:

\begin{equation}
    \left( \mathbb{CP}^{2} \# \overline{\mathbb{CP}}^{2},\, g_{\mathrm{Page}} \right), \qquad 
\left( \mathbb{CP}^{2} \# 2\, \overline{\mathbb{CP}}^{2},\, g_{\mathrm{CLW}} \right).
\end{equation}

\end{theorem}

The proof  will use the idea of the Weyl tensor consistently. Thus, a formal definition is most certainly warranted. 

\begin{definition}[Weyl tensor]
    Let $ (M^{n}, g), n \geq 3 $ be a Riemannian manifold, with:

    \begin{equation}
          R_{ijkl},\quad
  R_{ij}=R^{k}{}_{ikj},\quad
  R=g^{ij}R_{ij}.
    \end{equation}
    
    The Weyl tensor is the trace-free part of $ R $:

    \begin{equation}\label{eq:WeylTensor}
\begin{aligned}
  W_{ijkl}
  &= R_{ijkl}\\
  &\quad-\frac{1}{n-2}\Bigl(
       g_{ik}R_{jl}-g_{il}R_{jk}+g_{jl}R_{ik}-g_{jk}R_{il}
     \Bigr)
  +\frac{R}{(n-1)(n-2)}
     \Bigl(
       g_{ik}g_{jl}-g_{il}g_{jk}
     \Bigr).
\end{aligned}
\end{equation}

\end{definition}

The Weyl tensor shares all algebraic symmetries of $ R_{ijkl} $, is totally trace-free, and vanishes precisely when $ (M^{n}, g) $ is locally conformally flat.

\begin{proof}[Sketch of Proof]

    Let $ (M^4, g) $ be an Einstein manifold satisfying the hypotheses of the 
    theorem: namely, $ g $ is self-dual, locally irreducible, and has non-zero scalar curvature $ s $. 

    \vspace{0.5em}

    \noindent\textbf{Decomposition of Curvature.}
    
    Since $\operatorname{Ric} = \frac{s}{4}g $, the Riemann curvature tensor decomposes as:

    \begin{equation}
         R = W^{+} + \frac{s}{12}\, g \owedge g,
    \end{equation}

    where $ g \owedge g $ denotes the Kulkarni–Nomizu product, and $ W^{+}\colon \Lambda^{+} \to \Lambda^{+} $ is the self-dual Weyl curvature operator.

    \vspace{0.5em}
\noindent\textbf{Diagonalisation and Spectral Constancy of $W^{+}$.}

    The self-dual Weyl tensor $ W^{+}\colon\Lambda^{+}\!\to\!\Lambda^{+} $ is self-adjoint and trace-free, and hence is pointwise diagonalisable with real eigenvalues $ \lambda_{1}, \lambda_{2}, \lambda_{3} $, satisfying $ \lambda_{1} + \lambda_{2} + \lambda_{3} = 0 $. 
    
    By Remark 10.3 of Derdziński's paper, the assumption that $ s \neq 0 $ ensures that the eigenvalues and their corresponding eigenforms  can be chosen smoothly across $ M $. That is, there exists a local orthonormal frame $ \{ \alpha_{1}, \alpha_{2}, \alpha_{3} \} $ of $ \Lambda^{+} $ such that:

    \begin{equation}
           W^{+}(\alpha_j) = \lambda_j \alpha_j, \qquad j = 1,2,3,
    \end{equation}

    and each $ \lambda_j \in \mathbb{R} $ is constant on $ M $.

    \vspace{0.5em}
\noindent\textbf{Case-by-Case Analysis.}

    We distinguish two cases, according to the spectrum of $ W^{+} $.
    
    \begin{enumerate}
        \item All eigenvalues are pairwise distinct.
        \item $ W^{+} $ has a repeated eigenvalue.
    \end{enumerate}

    Let us start by considering the first case. If all eigenvalues are pairwise distinct, the stabiliser of $ W^{+} $ under the adjoint action of $ \operatorname{SO}(3) $ is trivial. We now use the Singer-Thorpe proposition:

    \begin{proposition}[Singer-Thorpe]
        If the curvature operator of a four-dimensional Riemannian Einstein manifold is self-adjoint, trace-free, and has constant eigenvalues, then the curvature is parallel, $ \nabla R = 0 $ \citep{SingerThorpe1970}.
    \end{proposition}
    
    Applying to our scenerio, we have that $ \nabla W^{+} = 0 $, and since $ g $ is Einstein, $ \nabla R = 0 $. 
    
    Further, we can apply the Ambrose-Singer theorem:

    \begin{theorem}[Ambrose-Singer]
        If the Riemann curvature tensor is parallel, then the manifold is locally symmetric.
    \end{theorem}
    
    Then this implies that $ (M, g) $ is locally symmetric in this case.

    Let us now consider what happens if a double eigenvalue occurs. Suppose for instance that $ \lambda_{1} = \lambda_{2} \neq \lambda_{3} $. Then the corresponding two-dimensional eigenspace $ E = E = \operatorname{span}\{\alpha_1, \alpha_2\} \subset \Lambda^{+} $ defines an almost-complex structure $ J $ on $ M $, which is compatible with $ g $. The self-duality condition forces the associated Kähler form $ \omega(X, Y) = g(JX, Y) $ to be closed, and hence $ J $ is integrable. Thus, $ (M, g, J) $ is Kähler–Einstein.
    
    The final step is to then classify the compact Kahler-Einstein self-dual possibilities. By results from LeBrun and Singer (building on the previous work of Hitchin and Page), the only compact, self-dual, positive-scalar-curvature Kahler-Einstein four-manifolds are: 

    \begin{equation}
           \bigl(\mathbb{CP}^{2}\#\overline{\mathbb{CP}}^{2},\,g_{\mathrm{Page}}\bigr),
   \quad
   \bigl(\mathbb{CP}^{2}\#2\,\overline{\mathbb{CP}}^{2},\,g_{\mathrm{CLW}}\bigr).
    \end{equation}

    Hence these two metrics exhaust the non-symmetric possibilities.

    We can thus conclude that either:

    \begin{itemize}
\item[(i)] $ W^{+} $ has three distinct eigenvalues, so $ \nabla R = 0 $ and $ (M, g) $ is locally symmetric; or
\item[(ii)] $ W^{+} $ has a repeated eigenvalue, and the manifold lies in the Kähler–Einstein branch, where compactness forces $ (M, g) $ to be one of the Page or Chen–LeBrun–Weber metrics.
\end{itemize}
    
The theorem follows.

\end{proof}

Derdziński’s theorem reveals a profound geometric principle: in dimension four, symmetry is not merely a condition to impose, but can emerge naturally from curvature and topological constraints. Under mild assumptions such as self-duality and irreducibility, the Einstein condition yields local symmetry, collapsing the potential complexity of the manifold’s structure. This rigidity paves the way for our next step: linking these symmetric Einstein manifolds to minimal submanifolds via isometric immersion.

\subsection{Minimal Embeddings of Symmetric Spaces}

The connection between Einstein four-manifolds and minimal submanifolds is made concrete through a series of foundational results in differential geometry, most notably the work of Takahashi on symmetric space immersions.

\begin{theorem}[Takahashi]\label{thm:Takahashi}

Every irreducible compact symmetric space admits a minimal isometric immersion into a Euclidean  sphere \citep{takahashi1966minimal}.
    
\end{theorem}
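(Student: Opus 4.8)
The plan is to exploit the interplay between minimal immersions into spheres and eigenfunctions of the Laplacian. The crucial observation — often called Takahashi's lemma — is that an isometric immersion $f \colon (M^n, g) \to S^{N}(r) \subset \mathbb{R}^{N+1}$ is minimal (as a submanifold of the round sphere) if and only if the coordinate functions $f^1, \dots, f^{N+1}$, viewed as functions on $M$, satisfy $\Delta_g f^A = \tfrac{n}{r^2} f^A$; that is, each component is an eigenfunction of the Laplace–Beltrami operator with eigenvalue $\lambda = n/r^2$. This reduces the construction of a minimal immersion to producing a map to $\mathbb{R}^{N+1}$ whose components span an eigenspace and which is simultaneously isometric. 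First I would record this lemma with a short proof: compute $\Delta_g f = -n \vec H - \tfrac{n}{r^2} f$ for an isometric immersion into $S^N(r)$, where $\vec H$ is the mean curvature vector in the sphere, so that $\vec H = 0$ is equivalent to $\Delta_g f = \tfrac{n}{r^2} f$ componentwise.

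Next I would bring in the representation theory of compact symmetric spaces. Let $M = G/K$ be an irreducible compact symmetric space with $G$ compact and $K$ the isotropy group, and normalize $g$ to be the $G$-invariant metric. Fix an eigenvalue $\lambda > 0$ of $\Delta_g$ and let $V_\lambda \subset C^\infty(M)$ be the corresponding (finite-dimensional) eigenspace; $G$ acts on $V_\lambda$ by an orthogonal representation, and because $M$ is irreducible this representation is itself irreducible. Choose an $L^2$-orthonormal basis $\phi_1, \dots, \phi_{N+1}$ of $V_\lambda$ and define $f = c(\phi_1, \dots, \phi_{N+1}) \colon M \to \mathbb{R}^{N+1}$ for a constant $c$ to be fixed. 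The key structural point is that the symmetric $2$-tensor $\sum_A d\phi_A \otimes d\phi_A$ and the function $\sum_A \phi_A^2$ are both $G$-invariant: the first is an invariant symmetric $2$-tensor, the second an invariant function, hence (by irreducibility of the isotropy action on $T_pM$, and connectedness/transitivity of $G$) they equal $\mu\, g$ and a constant $\kappa$ respectively for some $\mu, \kappa > 0$. Rescaling $f$ by $c = \mu^{-1/2}$ makes $f^\ast g_{\mathbb{R}^{N+1}} = g$, so $f$ is isometric; and $\sum_A \phi_A^2 \equiv \kappa/\mu$ after rescaling, so the image lies in the sphere of radius $r = \sqrt{\kappa/\mu}$. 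Since each $\phi_A$ is a $\lambda$-eigenfunction, Takahashi's lemma gives minimality, provided one checks the consistency relation $\lambda = n/r^2$, which follows automatically by tracing the identity $\Delta_g f = \tfrac{n}{r^2} f$ against $f$ and using $|f|^2 = r^2$.

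I would close by noting that $f$ is an immersion: $df_p$ has trivial kernel because $V_\lambda$ separates tangent directions — if $df_p(v) = 0$ for all basis elements then $v$ annihilates every function in the $G$-invariant space $V_\lambda$, which (as $V_\lambda$ generates enough of $C^\infty(M)$, or directly from $f^\ast g = g$) forces $v = 0$; indeed isometry of $f$ already implies $df$ is injective. The main obstacle is the step identifying $\sum_A d\phi_A \otimes d\phi_A$ with a multiple of $g$ and $\sum_A \phi_A^2$ with a constant: this is where irreducibility of the symmetric space is essential, since it guarantees both that the relevant $G$-representations on $V_\lambda$ and on $T_pM$ are irreducible and that the only invariant symmetric $2$-tensors are multiples of the metric (Schur's lemma). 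Everything else — the eigenfunction lemma, the rescaling, the verification of $\lambda = n/r^2$ — is then routine. One should also remark that any positive eigenvalue $\lambda$ works, so the immersion is highly non-unique; the first nonzero eigenvalue gives the "smallest" such sphere, recovering the Veronese-type embeddings, in particular the minimal embedding of $\mathbb{CP}^2$ referenced in the introduction.
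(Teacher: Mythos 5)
Your proposal is correct and follows essentially the same route as the paper's sketch: Takahashi's lemma reducing minimality in a sphere to the eigenfunction equation $\Delta x = \lambda x$, followed by the eigenspace map built from an $L^2$-orthonormal basis of $V_\lambda$, with Schur's lemma applied to the $G$-invariant pullback tensor (via irreducibility of the isotropy representation) to obtain isometry after rescaling. Your added details (constancy of $\sum_A \phi_A^2$, the trace identity fixing $\lambda = n/r^2$, injectivity of $df$) are consistent elaborations of the same argument.
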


\begin{proof}[Sketch of Proof]
Takahashi first proved that an isometric an isometric immersion $x: M \rightarrow \mathbb{R}^{m+k}$ of a Riemannian manifold $M$ into Euclidean space satisfying $\Delta x = \lambda x$ with $\lambda \neq 0 $, is minimal in a sphere of radius $ r = \sqrt{m / \lambda}$.

Now, let $ M $ be a compact homogeneous Riemannian manifold whose isotropy representation on $ T_{p}M $ is irreducible. For a non-zero eigenvalue $\lambda $, the eigenspace $ V_\lambda=\{f\in C^\infty(M)\mid\Delta f=\lambda f\} $ is finite-dimensional and $ G = \mathrm{Isom}(M) $-invariant.

Choose an orthonormal basis $\{f_1, \dots, f_n\}$ for $V_\lambda$, with respect to this inner product. We obtain a mapping $\tilde{x} : M \rightarrow \mathbb{R}^n$ by

\begin{equation}
    \tilde{x}(p) = (f_1(p), \dots, f_n(p)) \quad \text{for } p \in M.
\end{equation}

The metric pulled back via \(\tilde x\) is then:

\begin{equation}
    \tilde g=\sum_i df_i\otimes df_i.
\end{equation}

This pullback map is $ G $-invariant, and so Schur’s lemma forces $\tilde{g} = c^2 g$ with $c \neq 0$.

We can then rescale by $ c^{-1} $, giving the map $x(p) = \frac{1}{c} \tilde{x}(p)$, which defines an isometric immersion of $M$ into $\mathbb{R}^n$, satisfying $\Delta x = \lambda x$ with $\lambda \neq 0$. By Takahashi’s prior theorem, this immersion is minimal into a sphere.

We conclude that every compact homogeneous Riemannian manifold with irreducible isotropy group admits a minimal isometric immersion into a Euclidean sphere. In particular, every irreducible compact symmetric space admits a minimal isometric immersion as well.
    
\end{proof}

This result shows that highly symmetric spaces can be realised as minimal submanifolds of ambient Euclidean spaces, which offers a geometric lens from which one can view their structure.

Combining these insights, with Jensen's classification, we arrive at the following:

\begin{theorem}[Minimal realisation of locally homogeneous Einstein four-manifolds]

    Let $ (M^{4}, g) $ be a locally irreducible Einstein four-manifold. Then:

    \begin{itemize}
        \item $ M $ is locally symmetric (by Derdzinski’s theorem)
        \item If $ M $ is compact, then it admits a minimal immersion into a Euclidean sphere (by Takahashi's Theorem).
    \end{itemize}
    
\end{theorem}

\begin{proof}
    The first statement follows directly from Derdzinski's theorem which is built on Jensen's original theorem, and the second is a corollary of Takahashi’s result for compact symmetric spaces. 
\end{proof}

This synthesis of symmetry and minimality reveals a surprising unity between seemingly disparate domains. Through the lens of Takahashi’s theorem and the vision of Derdzinski building on the prior work of Jensen, Einstein four-manifolds emerge not merely as solutions to variational problems, but as participants in a broader geometric narrative. One where curvature, symmetry, and immersion speak in harmony.

\subsection{A Concrete Minimal Immersion of an Einstein Manifold}

A natural way to illustrate the ideas of this chapter is to present a specific and striking example: the complex projective plane \(\mathbb{CP}^{2}\), equipped with its normalised Fubini–Study metric, admits a minimal isometric immersion into the round sphere
\(S^{7}\subset\mathbb R^{8}\). This example elegantly synthesises ideas from Riemannian, complex, and algebraic geometry.

\subsubsection{The Geometry of $ \mathbb{CP}^{2} $}

\begin{definition}[Complex Projective Plane]

    The complex projective plane $ \mathbb{CP}^{2} $ is the space of complex lines through the origin in \(\mathbb C^{3}\):

\begin{equation}
    \mathbb{CP}^{2}\;=\;
\bigl(\mathbb C^{3}\setminus\{0\}\bigr)\big/\!\bigl(z\sim\lambda z,\;\lambda\in\mathbb C^{\!*}\bigr).
\end{equation}

\end{definition}

This defines a smooth, compact, complex manifold of complex dimension $ 2 $ (or real dimension $ 4 $). The distinguished Riemannian metric is the Fubini-Study metric,  $ g_{ \mathrm{FS} } $, which has the advantage of being Kähler and of constant holomorphic sectional curvature.

\begin{definition}[Fubini-Study Metric]\label{Fubini-Study-Metric}
In the affine chart \([1:z_{1}:z_{2}]\), set:

\begin{equation}
    g_{i\bar j}\;=\;
\partial_{z_{i}}\partial_{\bar z_{j}}
\log\!\bigl(1+|z_{1}|^{2}+|z_{2}|^{2}\bigr),
\qquad i,j=1,2.
\end{equation}

This Hermitian metric is Kähler with constant holomorphic sectional curvature \(+4\).\citep{matsumoto2018takeuchi}.
    
\end{definition}

Note, we have introduced the notion of the Hermitian metric, so we include the following definition for completeness:

\begin{definition}[Hermitian Metric]

    A Hermitian metric on a complex manifold $ M $ is a Riemannian metric $ g $ which satisfies:

    \begin{equation}
        g(JX, JY) = g(X, Y), \quad \forall X, Y \in T_xM, \ \forall x \in M,
    \end{equation}

    where $ J $ denotes the almost complex structure on $ M $.
    
\end{definition}

\subsubsection{The Veronese Embedding}

In order to embed $ \mathbb{CP}^{2} $ as a submanifold of a sphere, the Veronese embedding is introduced, which is a classical method to realise projective spaces as algebraic subvarieties of higher-dimensional projective space.

\begin{definition}[Veronese Embedding]

    The $ d $-uple Veronese embedding is the morphism:

    \begin{equation}
        \nu_{d}:\mathbb{P}^{n}\longrightarrow\mathbb{P}^{N},
\qquad
[x_{0}\!:\!\dots\!:x_{n}]
\mapsto\bigl[\text{all monomials of degree }d\bigr].
    \end{equation}

    This map realises $ \mathbb{P}^{n} $ as a smooth projective variety of degree $ d^{n} $ in $ \mathbb{P}^{N} $, where $ N = (n+d) -1 $ \citep{harris1992algebraic, shafarevich1994basic}.
    
\end{definition}

The most fundamental example is taking $ n=2,d=2$ , which gives the classical Veronese surface:

\begin{equation}
    \nu_{2}:\mathbb{CP}^{2}\longrightarrow\mathbb{CP}^{5},
\quad
[x:y:z]\longmapsto[x^{2}:xy:xz:y^{2}:yz:z^{2}].
\end{equation}

We would like to explore this in more detail.

\subsubsection{Lifting to the Sphere and Minimality}

Consider the lift of $ \nu_{2} $ to unit-length homogeneous coordinates in $ \mathbb{C}^{6}$:

\begin{equation}
    \tilde\nu_{2}([x:y:z])
=\frac{1}{\|v\|}
\bigl(x^{2},\,xy,\,xz,\,y^{2},\,yz,\,z^{2}\bigr)
\in\mathbb{C}^{6},
\qquad
\|v\|^{2}=|x|^{4}+|x|^{2}|y|^{2}+\dots+|z|^{4}.
\end{equation}

This defines a map:

\begin{equation}
    \tilde{\nu}_2 : \mathbb{CP}^2 \rightarrow \mathbb{S}^{11} \subset \mathbb{C}^6,
\end{equation}

which is a horizontal lift for the Hopf fibration:

\begin{equation}
    \nu_2 = \pi \circ \tilde{\nu}_2.
\end{equation}

Here, \( \pi: \mathbb{S}^{11} \to \mathbb{CP}^{5} \) denotes the generalised Hopf fibration, with \(\nu_{2}=\pi\circ\tilde\nu_{2}\).

\begin{definition}[Hopf Fibration]
The Hopf fibration is the smooth surjective map

\begin{equation}
    \pi: \mathbb{S}^{2n+1} \subset \mathbb{C}^{n+1} \to \mathbb{CP}^n, \quad z \mapsto [z],
\end{equation}

which sends each unit vector $ z \in \mathbb{S}^{2n+1} $ to its complex line through the origin. The fibers are circles $ \mathbb{S}^1 $, and this realises $ \mathbb{S}^{2n+1} $ as a principal $ \mathbb{S}^1 $-bundle over $ \mathbb{CP}^n $ \citep{frankel2011geometry, kobayashi1969foundations}.
\end{definition}

This structure underlies the Veronese embedding’s lift: by normalising homogeneous coordinates, one maps $ \mathbb{CP}^2 $ into $ \mathbb{S}^{11} \subset \mathbb{C}^6 $, and the Hopf fibration ensures that this lift descends correctly back to projective space.

\subsubsection{Minimal Immersion into $ S^{7} $}

Because the image of $ \tilde\nu_{2} $ is orthogonal to the circle fibres, standard submersion theory shows that
$ \tilde\nu_{2} $ is minimal in $ S^{11} $ if and only if $ \nu_{2}$ is minimal in $ \mathbb{CP}^{5} $.

Finally, identify $ \mathbb{C}^{6}\cong\mathbb{R}^{12}$ and observe that the real and imaginary parts of the six complex coordinates span an 8-dimensional real linear subspace $ \mathbb{R}^{8}\subset\mathbb{R}^{12} $.
The image of $ \tilde\nu_{2} $ lies entirely inside the intersection $ S^{11}\cap\mathbb{R}^{8}=S^{7} $, and the induced metric equals $ g_{\mathrm{FS}} $.
Hence we arrive at:

\begin{proposition}
The complex projective plane $ \left( \mathbb{CP}^2, g_{\mathrm{FS}} \right) $ admits a minimal isometric immersion into the round sphere $ \mathbb{S}^7 \subset \mathbb{R}^8 $.
\end{proposition}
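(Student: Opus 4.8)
The plan is to prove the Proposition as an instance of Takahashi's theorem (Theorem~\ref{thm:Takahashi}), using that $(\mathbb{CP}^{2},g_{\mathrm{FS}})$ is a compact, irreducible Riemannian symmetric space --- namely $SU(3)/S(U(2)\times U(1))$ --- whose isotropy representation is irreducible. Takahashi's theorem then immediately produces a minimal isometric immersion into \emph{some} round sphere, so the real content lies in pinning the target down to $S^{7}$ and relating the resulting map to the Veronese picture of the preceding subsections. The operative map is the first eigenmap of the Laplacian of $g_{\mathrm{FS}}$, the Hermitian counterpart of the Veronese embedding: it sends a complex line $[z]$, $z\in\mathbb{C}^{3}\setminus\{0\}$, to
\begin{equation}
\Phi([z]) \;=\; P_{[z]} \;-\; \tfrac{1}{3} I,
\end{equation}
where $P_{[z]}$ is the Hermitian orthogonal projection of $\mathbb{C}^{3}$ onto the line $[z]$, with entries $z_{i}\bar z_{j}/|z|^{2}$ in homogeneous coordinates. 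Since $\operatorname{tr}\Phi([z])=0$ and $\Phi([z])$ is Hermitian, the image lies in the $8$-dimensional real vector space of trace-free Hermitian $3\times 3$ matrices, which we identify with $\mathbb{R}^{8}$; in contrast to the purely holomorphic tensor $z\mapsto z\otimes z$, the map $\Phi$ is manifestly well defined on $\mathbb{CP}^{2}$, its components span the trace-free combinations of the functions $z_{i}\bar z_{j}/|z|^{2}$, and $|\Phi|$ is constant, so the image lies on a round sphere in $\mathbb{R}^{8}$.

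First I would check that, after a fixed positive rescaling, $\Phi$ is isometric for the normalised metric of Definition~\ref{Fubini-Study-Metric}: the pulled-back metric $\Phi^{*}\langle\,\cdot\,,\,\cdot\,\rangle$ is $U(3)$-invariant, hence equals $c\,g_{\mathrm{FS}}$ for some $c>0$ by Schur's lemma, and one rescales accordingly --- precisely the manoeuvre in the proof sketch of Theorem~\ref{thm:Takahashi}. Next I would verify the eigenvalue identity $\Delta_{g_{\mathrm{FS}}}\Phi=\lambda\Phi$ with $\lambda$ the first nonzero eigenvalue; here the rank-one symmetric structure enters, since the first eigenspace of $\mathbb{CP}^{2}$ is the adjoint $SU(3)$-module, of real dimension $3^{2}-1=8$, and that count is exactly what forces the ambient space to be $\mathbb{R}^{8}$ and the target $S^{7}$. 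With the eigenvalue identity in hand, Takahashi's criterion yields minimality of the rescaled $\Phi$ in a sphere of radius $\sqrt{4/\lambda}$, and fixing the normalisation of $g_{\mathrm{FS}}$ so that this radius equals $1$ delivers the minimal isometric immersion $\mathbb{CP}^{2}\hookrightarrow S^{7}\subset\mathbb{R}^{8}$. The Veronese picture of the preceding subsections then appears as the algebraic shadow of this construction: $\Phi$ is quadratic in the homogeneous coordinates and factors through the Hopf fibration in the same spirit as $\nu_{2}=\pi\circ\tilde\nu_{2}$, and the two structural principles at play --- a complex submanifold of a Kähler manifold is minimal, and a horizontal lift through a Riemannian submersion with totally geodesic fibres stays minimal --- explain the robustness of minimality here.

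The step I expect to be the main obstacle is the bookkeeping that reconciles the algebraic Veronese model with the Riemannian first-eigenmap model under a single fixed normalisation of $g_{\mathrm{FS}}$: one must keep track that it is the \emph{Hermitian} projection $P_{[z]}$ (equivalently $z z^{*}$), not the holomorphic tensor $z\otimes z$, that descends to $\mathbb{CP}^{2}$ and whose image lies in an $8$-dimensional real subspace --- equivalently, that the effective number of independent coordinate functions is $8$ rather than the naive count arising from $\mathbb{C}^{6}\cong\mathbb{R}^{12}$ --- and that the final rescaling onto the \emph{unit} sphere is absorbed into what one means by the normalised Fubini--Study metric. Once these compatibilities are settled the Proposition follows; the most economical self-contained route, forgoing the explicit monomial formulas, is simply to invoke Theorem~\ref{thm:Takahashi} for the compact irreducible symmetric space $\mathbb{CP}^{2}$ together with the fact that its first Laplace eigenspace has real dimension $8$.
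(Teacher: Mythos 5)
Your proposal is correct, but it takes a genuinely different route from the paper. The paper argues through the holomorphic Veronese map $\nu_{2}:\mathbb{CP}^{2}\to\mathbb{CP}^{5}$, lifts it to $\tilde\nu_{2}$ into $S^{11}\subset\mathbb{C}^{6}$, invokes minimality of complex submanifolds of K\"ahler manifolds together with the Hopf submersion, and then asserts that the image lies in an $8$-dimensional real subspace, so that it sits in $S^{11}\cap\mathbb{R}^{8}=S^{7}$. You instead apply Takahashi's theorem (Theorem~\ref{thm:Takahashi}) directly to the compact irreducible symmetric space $\mathbb{CP}^{2}=SU(3)/S(U(2)\times U(1))$, realising the immersion as the first eigenmap $\Phi([z])=P_{[z]}-\tfrac13 I$ into the trace-free Hermitian $3\times 3$ matrices $\cong\mathbb{R}^{8}$, with the target dimension forced by the real dimension $8$ of the first Laplace eigenspace (the adjoint $SU(3)$-module), Schur's lemma giving isometry up to scale, and the criterion $\Delta x=\lambda x$ giving minimality in a sphere. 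Your route buys rigour exactly where the paper is weakest: the functions $z_{i}\bar z_{j}/|z|^{2}$ genuinely descend to $\mathbb{CP}^{2}$, whereas the paper's lift $\tilde\nu_{2}$, built from the holomorphic monomials $x^{2},xy,\dots$, is defined only up to the Hopf circle action, and its claimed confinement to an $8$-dimensional real subspace of $\mathbb{R}^{12}$ is never justified; your observation that it is $zz^{*}$ rather than $z\otimes z$ that is well defined is precisely what repairs this. The paper's route, in exchange, keeps the explicit algebraic Veronese picture and a soft K\"ahler/submersion argument for minimality, which you also record, and your eigenmap description matches the do Carmo--Wallach characterisation quoted at the end of the section. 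The computations you defer (the eigenvalue identity and the radius normalisation absorbed into the ``normalised'' Fubini--Study metric) are routine for this rank-one symmetric space and do not affect correctness.
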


Carmo–Wallach showed that this immersion is characterised by the first
non-trivial eigenspace of the Laplacian on $ (\mathbb{CP}^{2},g_{\mathrm{FS}}) $ \citep{doCarmoWallach1971}.

This example concretely demonstrates the key theme of this exposition: that certain Einstein manifolds, particularly those with high symmetry, can indeed be realised as minimal submanifolds of ambient spaces. It gives not only a conceptual link between curvature variational theories, but also a precise geometric construction bridging the two domains.

\section{Conclusion}

This exposition set out to explore the deep geometric parallels between minimal surfaces and Einstein four-manifolds. Though these structures emerge from different dimensions and inhabit distinct corners of differential geometry, they share surprising common ground.

Through a shared variational framework, second variation theory, stability conditions, monotonicity formulae, and compactness theorems, both minimal surfaces and Einstein manifolds reveal a unified analytical and topological structure. Each admits natural decompositions, sheeted versus non-sheeted, or thick versus thin, that separate regions of regularity from zones of degeneration, allowing powerful geometric and analytic tools to take hold.

These parallels are not merely formal. Under appropriate symmetry assumptions, an Einstein manifold may be isometrically immersed as a minimal submanifold into a higher-dimensional ambient space. The explicit example of $ \mathbb{CP}^2 $, minimally immersed into the round sphere $ S^7 $, demonstrates that in special cases, the two theories do not just echo one another, they coalesce.

More broadly, this study offers a perspective on how concepts in geometric analysis resonate across seemingly disparate settings. The unity we observe hints at deeper principles still to be uncovered.

Naturally, many questions remain open. Can similar analogies be drawn in other elliptic systems, such as Yang–Mills theory or constant scalar curvature metrics? Might further investigation into symmetric spaces yield new classification results for Einstein four-manifolds? Could there be a broader framework unifying variational problems through moment map structures or stability conditions?

In closing, this has been both a journey of geometry and a meditation on mathematical philosophy. It shows that even within the abstract world of high-dimensional curvature, there are moments when distinct objects whisper the same truths. That Einstein four-manifolds and minimal surfaces, born of different dimensions, yet bound by shared structure, might speak the same language is a gentle reminder: in mathematics, hidden harmonies are always waiting to be heard.

\bibliographystyle{elsarticle-harv}
\bibliography{References}

\end{document}